\documentclass[11pt]{amsart}
\allowdisplaybreaks
\usepackage{mathrsfs}
\usepackage{cases}
\usepackage{amsmath}
\usepackage{amssymb}
\usepackage{amsfonts}
\usepackage{graphicx}
\usepackage{esint}
\usepackage{hyperref}
\makeatletter
\def\@cite#1#2{{\rm \text{#1}}}
\makeatother

\hypersetup{colorlinks=true, linkcolor=blue, citecolor=blue,}
\numberwithin{equation}{section}
\newtheorem{theorem}{Theorem}[section]

\newtheorem{corollary}[theorem]{Corollary}

\newtheorem{lemma}[theorem]{Lemma}
\newtheorem{proposition}[theorem]{Proposition}
\newtheorem{remark}[theorem]{Remark}

\newcommand{\sn}{\mathrm{sn}}
\newcommand{\vol}{\mathrm{vol}}

\begin{document}
	\title[Weighted Volume Comparison and monotonicity ]
	{Weighted  volume comparison and monotonicity for   $L^p$-bound of  Bakry-\'{E}mery Ricci  curvature}
	
	\author{Jintao Ye and Xiaohua  ${\rm ZHU}^\ddag$}
	\address{Department of Mathematics, Peking University, Beijing 100080, P. R. China}
	\email{2501110031@stu.pku.edu.cn}
	 \email{xhzhu@math.pku.edu.cn}

	\subjclass[2000]{Primary 53C20; Secondary:  53C55,
 58J05}
	\keywords{Bakry-\'{E}mery Ricci curvature,     volume comparison theorem,  Ricci soliton,
	Hamilton-Tian conjecture}
\thanks{$\ddag$  partially supported  by National Key R\&D Program of China
	2023YFA1009900  and 2020YFA0712800,  and NSFC 12271009.}
	
	\begin{abstract}
		We establish a Petersen-Wei type relative volume comparison theorem for weighted
		Riemannian manifolds under both $L^p$-bounds of the Bakry-\'{E}mery Ricci curvature and the gradient of potential function. As an  application, we give a modified proof for a volume comparison and monotonicity of K\"{a}hler-Ricci flow established in a recent work of  Tian-Zhang-Zhang-Zhu-Zhu with improved estimate for error term.
	\end{abstract}
	\maketitle

	\section{Introduction}

    Let $(M^n,g)$ be a complete Riemannian manifold, and $\mu(x)$ be the smallest eigenvalue of the Ricci tensor $\mathrm{Ric}(x)$
     at $x$. For $\lambda\in\mathbb{R}$ and $p>0$, we let
    \[
    \mathrm{Ric}^\lambda_-(x):=\max\{-\mu(x)+(n-1)\lambda,0\},
    \]
    and define its $L^p$ norm as follows
    \[
    \|\mathrm{Ric}^\lambda_-(x)\|_p=\left(\int_{M}\mathrm{Ric}^\lambda_-(x)^pd\mathrm{vol}\right)^{\frac{1}{p}}.
    \]
    Clearly, $\operatorname{Ric}(x)\ge (n-1)\lambda g$ if and only if $\mathrm{Ric}^\lambda_-(x)\equiv0$.

    Let $M_\lambda^n$ be an $n$-dimensional space form with constant sectional curvature $\lambda$, and $v(n,\lambda,r)$  the volume of a geodesic  $r$-ball  in $M^n_\lambda$.  In [\cite{PW}], Petersen-Wei proved the following volume comparison theorem.

    \begin{theorem}\label{PW}
    	For $\lambda\leq 0$, $p>n/2$ and $0<r\leq R$, there exists a constant $C(n,p,\lambda,R)$ such that
    	\begin{equation}\label{PW-monotone}
    	\left(\frac{\operatorname{vol}B(x,R)}{v(n,\lambda,R)}\right)^{\frac1{2p}}-\left(\frac{\operatorname{vol}B(x,r)}{v(n,\lambda,r)}\right)^{\frac1{2p}}
    	\leq C(n,p,\lambda,R)\|\mathrm{Ric}^\lambda_-\|_p^{\frac{1}{2}}.
    	\end{equation}
    	Moreover, $C(n,p,\lambda,R)=O(R^{1-\frac{n}{2p}})$ as $R\to 0$.
    \end{theorem}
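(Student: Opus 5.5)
We prove the statement by the Petersen--Wei method: an ODE inequality for the excess of the mean curvature of geodesic spheres, integrated along geodesics in polar coordinates. Write $d\mathrm{vol}=\mathcal{A}(t,\theta)\,dt\,d\theta$ in geodesic polar coordinates at $x$, and extend $\mathcal{A}$ by zero beyond the cut locus. Let $m(t,\theta)=\partial_t\log\mathcal{A}$ be the mean curvature of the distance spheres, and let $m_\lambda=(n-1)\sn_\lambda'/\sn_\lambda$ be the model value. The Bochner formula applied to $r=d(x,\cdot)$ gives the Riccati inequality
\[
m'+\frac{m^2}{n-1}\le -\mathrm{Ric}(\partial_r,\partial_r)\le -(n-1)\lambda+\mathrm{Ric}^\lambda_-.
\]
Subtracting the model identity $m_\lambda'+m_\lambda^2/(n-1)=-(n-1)\lambda$ and setting $\psi=\max\{m-m_\lambda,0\}$, we get
\[
\psi'+\frac{\psi^2}{n-1}+\frac{2\psi m_\lambda}{n-1}\le \mathrm{Ric}^\lambda_-.
\]
Near $t=0$ we have $m\sim (n-1)/t\sim m_\lambda$, so $\psi(0)=0$.

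The first key estimate is
\[
\int_0^R\psi^{2p}\mathcal{A}\,dt\le C(n,p)\int_0^R(\mathrm{Ric}^\lambda_-)^p\mathcal{A}\,dt \qquad (p>n/2).
\]
To prove it, multiply the inequality for $\psi$ by $\psi^{2p-2}\mathcal{A}$ and integrate by parts in $t$, using $\mathcal{A}'=m\mathcal{A}\le(\psi+m_\lambda)\mathcal{A}$. Since $\lambda\le0$, $m_\lambda\ge0$, so the $m_\lambda$ terms can be dropped with the right sign; this is where $\lambda\le0$ enters. The coefficient of $\int\psi^{2p}\mathcal{A}$ is then $\frac{1}{n-1}-\frac{1}{2p-1}$, which is positive exactly when $p>n/2$. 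Finish with Hölder's inequality: $\int\psi^{2p-2}\mathrm{Ric}^\lambda_-\mathcal{A}\le(\int\psi^{2p}\mathcal{A})^{1-1/p}(\int(\mathrm{Ric}^\lambda_-)^p\mathcal{A})^{1/p}$. Integrating over $\theta$ gives $\|\psi\|_{L^{2p}(B(x,R))}^{2p}\le C\|\mathrm{Ric}^\lambda_-\|_p^p$. This step is the main technical obstacle. The delicate points are the boundary terms at $t=0$ and at the cut locus, where $\mathcal{A}$ jumps to $0$; these only help, since $\mathcal{A}$ decreases there.

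Next we convert this into monotonicity of the volume ratio. We have
\[
\frac{d}{dt}\frac{\mathcal{A}}{\mathcal{A}_\lambda}\le\psi\frac{\mathcal{A}}{\mathcal{A}_\lambda}.
\]
For the area ratio $s(t)=\mathrm{vol}\,\partial B(x,t)/\mathrm{vol}\,\partial B_\lambda(t)$, Hölder gives
\[
\frac{d}{dt}\frac{\mathrm{vol}\,\partial B(x,t)}{\mathrm{vol}\,\partial B_\lambda(t)}\le\frac{1}{\mathrm{vol}\,\partial B_\lambda(t)}\Big(\int\psi^{2p}\mathcal{A}\Big)^{\frac1{2p}}\big(\mathrm{vol}\,\partial B(x,t)\big)^{1-\frac1{2p}}.
\]
Then pass to volumes. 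Writing $f(t)=\mathrm{vol}B(x,t)/v(n,\lambda,t)$, differentiate and use the standard Petersen--Wei bound for the deviation of sphere ratios from ball ratios:
\[
\frac{d}{dt}f\le C\,\frac{t}{v(n,\lambda,t)}\,\|\psi\|_{L^{2p}(B(x,t))}\,\mathrm{vol}B(x,t)^{1-\frac{1}{2p}}.
\]
This follows by integrating the sphere estimate over radii $\le t$ and using $\mathcal{A}_\lambda(t)\ge$ its values at smaller radii, up to a factor $t/v$.

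This yields $\frac{d}{dt}f^{1/(2p)}\le C\,\|\psi\|_{2p}\,t\,v(n,\lambda,t)^{-1/(2p)}$. Integrating from $r$ to $R$ and inserting the first key estimate $\|\psi\|_{2p}\le C\|\mathrm{Ric}^\lambda_-\|_p^{1/2}$ gives the asserted inequality with
\[
C(n,p,\lambda,R)=C\int_0^R t\,v(n,\lambda,t)^{-\frac1{2p}}\,dt.
\]
Since $v\sim t^n$ as $t\to0$, this integral is $O(R^{2-n/(2p)})$, and it converges because $n/(2p)<1<2$. This matches, indeed improves on, the asserted $O(R^{1-n/2p})$; a cruder bound such as $\int_0^R v^{-1/2p}\,dt\cdot R$ gives the stated rate.
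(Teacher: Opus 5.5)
Your strategy is the Petersen--Wei argument. The paper quotes this theorem from [PW], and its own proof of Theorem~\ref{YeZ-theorem} with $f\equiv0$, $a=0$ runs the same scheme.

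The first half of your proof is correct:
\begin{itemize}
\item the Riccati inequality;
\item the estimate $\int_0^R\psi^{2p}\mathcal{A}\,dt\le C(n,p)\int_0^R(\mathrm{Ric}^\lambda_-)^p\mathcal{A}\,dt$, with coefficient $\frac{2p-n}{(n-1)(2p-1)}$, nonnegative boundary terms, and H\"older;
\item the sphere-ratio bound.
\end{itemize}
The paper multiplies by $r\psi^{2p-2}$ instead of $\psi^{2p-2}$, but that extra weight is only needed to absorb the $\rho_a(\nabla f)$ terms.

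\textbf{The error is in the passage to volumes.} Your bound
\[
\frac{d}{dt}f\le C\,\frac{t}{v(n,\lambda,t)}\,\|\psi\|_{L^{2p}(B(x,t))}\,\mathrm{vol}B(x,t)^{1-\frac{1}{2p}}
\]
has a spurious factor $t$. It cannot hold: the left side has dimension $(\text{length})^{-1}$, while the right side is scale invariant.

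Do the quotient rule as in Lemma~\ref{volratio}, with $v(t)=v(n,\lambda,t)$ and $v'(t)=|S^{n-1}|\mathcal{A}_\lambda(t)$. Use $\int_0^s\mathcal{A}_\lambda\le s\,\mathcal{A}_\lambda(s)$, which follows from monotonicity of $\sn_\lambda$, i.e.\ $\lambda\le 0$ again. This gives
\[
\frac{d}{dt}f=\frac{v'(t)}{v(t)^2}\int_0^t\mathcal{A}_\lambda(s)\int_{S^{n-1}}\Big(\frac{\mathcal{A}(t,\theta)}{\mathcal{A}_\lambda(t)}-\frac{\mathcal{A}(s,\theta)}{\mathcal{A}_\lambda(s)}\Big)\,d\theta\,ds\le\frac{v'(t)}{v(t)^2}\int_{B(x,t)}r\,\psi\,d\mathrm{vol}.
\]
After H\"older, the prefactor is therefore $\frac{t\,v'(t)}{v(t)^2}\le\frac{C(n,\lambda,R)}{v(t)}$, not $\frac{t}{v(t)}$. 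When $\lambda=0$ it is exactly $\frac{n}{v(t)}$.

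Hence $\frac{d}{dt}f^{1/2p}\le C\,v(n,\lambda,t)^{-\frac{1}{2p}}\|\psi\|_{L^{2p}(B(x,t))}$. Integrating from $r$ to $R$ and inserting your first estimate gives
\[
C(n,p,\lambda,R)=C\int_0^R v(n,\lambda,t)^{-\frac{1}{2p}}\,dt=O\big(R^{1-\frac{n}{2p}}\big).
\]
This integral is finite exactly because $p>n/2$. In your version, integrability at $0$ was automatic, which should have been a warning sign. With this correction, your argument proves the theorem as stated.

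\textbf{The claimed improvement to $O(R^{2-\frac{n}{2p}})$ is false.} For $\lambda=0$:
\begin{itemize}
\item Take a compact hyperbolic manifold and $0<r<R$ below the injectivity radius. The left side of (\ref{PW-monotone}) is then some $\delta>0$.
\item Replace $g$ by $\epsilon^2 g$ and $r,R$ by $\epsilon r,\epsilon R$. The left side stays equal to $\delta$.
\item Meanwhile $\|\mathrm{Ric}^0_-\|_p^{1/2}$ scales like $\epsilon^{-1+\frac{n}{2p}}$, so a constant of order $R^{2-\frac{n}{2p}}$ would force $\delta\le O(\epsilon)$.
\end{itemize}
So $R^{1-\frac{n}{2p}}$ is the sharp rate for this form of the inequality. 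Your last paragraph, including the remark about the cruder bound $R\int_0^R v^{-1/2p}\,dt$ (which is also of order $R^{2-\frac{n}{2p}}$), should be replaced by the computation above.
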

    The  inequality  (\ref{PW-monotone})  generalizes the classical Bishop-Gromov's monotonicity of  Riemannian manifolds with Ricci curvature bounded below. When $r$ is sufficiently small, the volume ratio $\frac{\vol B(x,r)}{v(n,\lambda,r)}$ in (\ref{PW-monotone})  becomes almost monotone. As applications of (\ref{PW-monotone}),  the extensions of Myers'  theorem,  sphere pinching theorem  and heat kernel estimate, etc.,   have  been explored  for Riemannian manifolds with integral Ricci curvature bounds [\cite{PS, PW2}]. In [\cite{TZhangZ}], Tian-Zhang used this monotonicity property to study  the regularity of K\"{a}hler-Ricci flow on  Fano manifolds, and in particular  they proved the Hamilton-Tian conjecture   for    three-dimensional  K\"{a}hler-Ricci flow on  Fano manifolds [\cite {Ti}].

    On the other hand, there has been a  great interest in extending volume comparison results  under  Bakry-\'{E}mery (abbreviated as BE) Ricci curvature condition.  On a Riemannian manifold $(M^n,g)$,   BE Ricci curvature associated to a smooth function $f$ is a modification of Ricci curvature, defined as
    \begin{equation}\label{potential}
    	\operatorname{Ric}_f(g)=\operatorname{Ric}(g)+\operatorname{Hess}f.
    \end{equation}
    This tensor,  which was first  introduced by Bakry and \'{E}mery in the study of diffusion processes (cf. [\cite{BE,  Qi}]),   appears naturally in many contexts in differential geometry.  In particular,  the equation $\mathrm{Ric}_f(g)=\lambda g$ for some constant $\lambda$ is exactly related to  a class of canonical metrics, called   gradient Ricci solitons, which plays an important role in the singularities analysis  in  Ricci flow, see [\cite{Pe, Ha}], etc. Another natural connection between Ricci curvature and BE  Ricci curvature is from the conformal geometry, for example,  see [\cite{TZhangZ1},\cite{WZ}]. More recently, by using the conformal geometry and BE Ricci curvature, Tian-Zhang-Zhang-Zhu-Zhu   [\cite{TZZZZ}] proved a volume comparison and a  monotonicity formula   for K\"{a}hler-Ricci flow on Fano manifolds, which leads to a new essential  proof of Hamilton-Tian conjecture directly,   also  see Section \ref{sec5} below.

    Denote  $(M,g,e^{-f}d\vol_g)$ to be  a weighted Riemannian manifold.   In [\cite{WW}], Wei-Wylie established a weighted volume comparison theorem of Bishop-Gromov for weighted Riemannian manifolds  with $\operatorname{Ric}_f(g)$  bounded below, assuming the boundedness of $|\nabla f|$ or $f$. Wang-Zhu [\cite{WZ}] applied their result to study the structure of limit  spaces  of Riemannian manifolds   and Cheeger-Colding-Tian theory with BE  Ricci curvature bounded below as done in [\cite{CC, CCT}] for Riemannian manifolds with Ricci curvature  bounded below. Later, Wu also extended the Wei-Wylie's result  to the case of $L^p$-bound of  $\mathrm{Ric}_f(g)$  as in Theorem \ref{PW}  by  assuming  that $|\nabla f|$ is bounded [\cite{Wu}].   However, the pointwise  bounded  condition of  $|\nabla f|$ seems  very strong. Actually, on a complete  Riemannian manifold with gradient shrinking  Ricci soliton  $(M, g; f)$,   $f$ has a  quadratic growth  of  distance  function  of $g$ and so $|\nabla f|$ increases  as fast as the distance function (cf. [\cite{CZ, CQ}]).  

    The main purpose of the present paper is to prove  a relative volume comparison theory  for a weighted Riemannian manifold $(M^n,g,e^{-f}d\mathrm{vol}_g)$ with both  $L^p$-bounds of  $\mathrm{Ric}_f(g)$ and $|\nabla f|$.

    Before stating our main result, we introduce some notations below. For $\lambda\in\mathbb{R}$, we define a quantity as $\mathrm{Ric}^\lambda_-(x)$,
    \[
    {\mathrm{Ric}_f^\lambda}_-(x):=\max\{(n-1)\lambda-\mu_f(x),\,0\},
    \]
    where $\mu_f(x)$ is the smallest eigenvalue of $\operatorname{Ric}_f(g)$ at $x$. When $\lambda=0$, we also denote it by  ${\mathrm{Ric}_f}_-(x)$.  For $x\in M$, $p>0$, $a\geq 0$, we denote  a weighted $L^p$-norm on a geodesic  $R$-ball  $B(x,R)$
    for   a measurable function $\phi$ by
    \[
    \|\phi \|_{p,f,a}(x,R):=\left(\int_{B(x,R)}|\phi|^pe^{-a\cdot d(x, \cdot)}d\mathrm{vol}_{f}\right)^\frac{1}{p},
    \]
    where $d\mathrm{vol}_f=e^{-f}d\mathrm{vol}$. When $a=0$, we also denote it as $\|\cdot\|_{p,f}(x,R)$.

    For any smooth vector field $V$ on $(M,g)$ and $a\geq 0$, we define
    \[
    \rho_a(V)(x):= \max\{|V|_g(x)-a,0\}.
    \]
    Moreover, for $a\geq 0$, we fix a point $O\in M_\lambda^n$ and define a weighted volume of the geodesic $r$-ball $B(O,r)$ in $M_\lambda^n$  as
    \[
    v_a(n,\lambda,r):=\int_{B(O,r)}e^{a\cdot d(O,\cdot)}d\vol(\cdot).
    \]
    Clearly, $v(n,\lambda,r)\leq v_a(n,\lambda
    ,r)\leq e^{ar}v(n,\lambda,r)$.

The following is our main result in this paper.

    \begin{theorem}\label{YeZ-theorem}
    	Let $(M^n,g,e^{-f}d\operatorname{vol})$ be a weighted Riemannian manifold and $x\in M$. Then for $\lambda\leq 0$, $p>n/2$, $a\geq 0$ and $0<r<R$ we have
    	\begin{equation}\label{YeZ-monotone}
    	\begin{aligned}
    		&\left(\frac{\vol_f B(x,R)}{v_a(n,\lambda,R)}\right)^{\frac{1}{2p}}-\left(\frac{\vol_f B(x,r)}{v_a(n,\lambda,r)}\right)^{\frac{1}{2p}}\\
    		&\leq
    		C(n,p,a,\lambda,R)\left(\|\rho_a(\nabla f)\|_{2p,f,a}(x,R)+\|{\mathrm{Ric}_f^\lambda}_-\|^{\frac{1}{2}}_{p,f,a}(x,R)\right),
    	\end{aligned}
    	\end{equation}
    	where $C(n,p,a,\lambda,R)=O(R^{1-\frac{n}{2p}})$ as $R\to 0$.
    \end{theorem}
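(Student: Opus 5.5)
We follow the Petersen--Wei strategy, replacing the mean curvature of distance spheres by its weighted version and absorbing $|\nabla f|$ through the weight $e^{a\cdot d}$.

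\textbf{Setup.} Work in polar coordinates $\exp_x(t\theta)$ around $x$. Write $d\mathrm{vol}_f=\mathcal{A}_f(t,\theta)\,dt\,d\theta$, so that $\mathcal{A}_f=e^{-f}\mathcal{A}$. Let $m=\Delta r$, $m_f=m-\partial_t f$, and let $\mathcal{A}_\lambda$, $m_\lambda$ be the corresponding model quantities in $M^n_\lambda$.

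\textbf{Riccati inequality.} The Bochner formula along radial geodesics gives
\[
m'+\frac{m^2}{n-1}\le -\mathrm{Ric}(\partial_t,\partial_t).
\]
Substituting $m=m_f+\partial_tf$ and $\mathrm{Ric}=\mathrm{Ric}_f-\mathrm{Hess} f$, the $f''$ terms cancel. We then choose to compare with the model $m_\lambda+a$, which is natural because $\frac{d}{dt}\log(e^{at}\mathcal{A}_\lambda)=m_\lambda+a$. Set
\[
\psi=\max\{m_f-m_\lambda-a,0\}.
\]
Expanding $m^2=(m_f+\partial_t f)^2$ and comparing with $m_\lambda'+m_\lambda^2/(n-1)=-(n-1)\lambda$, we aim for the inequality
\[
\psi'+\frac{\psi^2}{n-1}+\frac{2m_\lambda\psi}{n-1}\le {\mathrm{Ric}_f^\lambda}_- + C\,\rho_a(\nabla f)\,(\psi+m_\lambda+\rho_a)
\]
on the set where $\psi>0$. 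The point is that the cross term $\frac{2}{n-1}m\,\partial_tf$ must be handled so that only the excess $\rho_a(\nabla f)=(|\nabla f|-a)_+$ remains; the shift by $a$ in the comparison function is exactly what absorbs the part $|\nabla f|\le a$. Getting this step right is the main obstacle. If the naive expansion fails, I would first try the Wei--Wylie trick of integrating $\partial_t f$ by parts along the geodesic, or replacing $\psi$ by the comparison with $m_\lambda+a$ only after setting $\partial_t f\mapsto \partial_t f+a$. Young's inequality then splits the error terms into $\psi^2$ pieces, which are absorbed on the left, plus pieces like $\rho_a^2$ and $m_\lambda\rho_a$. The term $m_\lambda\rho_a\sim \rho_a/t$ needs care near $t=0$, just as ${\mathrm{Ric}_f^\lambda}_-$ does.

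\textbf{$L^{2p}$ estimate for $\psi$.} Following Petersen--Wei, multiply the inequality by $\psi^{2p-2}\mathcal{A}_f e^{-at}$ and integrate over $t\in[0,r]$ and $\theta$. Use
\[
(\mathcal{A}_fe^{-at})'\le (\psi+m_\lambda)\mathcal{A}_fe^{-at}
\]
together with $p>n/2$ to absorb the $m_\lambda$ terms. This yields
\[
\|\psi\|_{2p,f,a}(x,R)\le C(n,p)\bigl(\|{\mathrm{Ric}_f^\lambda}_-\|_{p,f,a}^{1/2}+\|\rho_a(\nabla f)\|_{2p,f,a}\bigr),
\]
with the weight $e^{-a\,d(x,\cdot)}$ arising exactly from $e^{-at}$.

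\textbf{Volume ratio.} We have
\[
\frac{d}{dt}\frac{\mathcal{A}_f}{e^{at}\mathcal{A}_\lambda}\le \psi\,\frac{\mathcal{A}_f}{e^{at}\mathcal{A}_\lambda}.
\]
The standard Petersen--Wei computation of $\frac{d}{dr}\frac{\mathrm{vol}_fB(x,r)}{v_a(n,\lambda,r)}$ proceeds through the Hölder inequality and the ODE for $y^{1/(2p)}$. Integrating from $r$ to $R$ then gives the stated inequality with $C=O(R^{1-n/(2p)})$, the power coming from $\int_0^R t^{\cdot}$ and the model volume $v_a\sim R^n$ as $R\to0$. The monotone-in-$\theta$ bookkeeping beyond the cut locus is handled as in [\cite{PW}], by setting $\mathcal{A}_f=0$ there.
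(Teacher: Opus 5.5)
\textbf{Gap: the $L^{2p}$ estimate for $\psi$.} Your Riccati inequality is correct as written. With $u=\partial_tf+a\ge-\rho_a(\nabla f)$, $m_\lambda\ge0$ and $-u^2/(n-1)\le0$, the naive expansion gives it with $C=2/(n-1)$; this is exactly Lemma \ref{riccati}. So the step you call the main obstacle is not where the difficulty lies. The actual gap is the $L^{2p}$ estimate.

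The new term $\frac{2}{n-1}m_\lambda\,\rho_a(\nabla f)\,\psi^{2p-2}$, with $m_\lambda\sim(n-1)/t$, is \emph{not} analogous to ${\mathrm{Ric}_f^\lambda}_-$. The latter carries no factor $1/t$ and is removed by ${\mathrm{Ric}_f^\lambda}_-\psi^{2p-2}\le\epsilon\psi^{2p}+C_\epsilon({\mathrm{Ric}_f^\lambda}_-)^p$.

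After multiplying by $\psi^{2p-2}\mathcal{A}_fe^{-at}$, your only good terms are $\psi^{2p}$ and $m_\lambda\psi^{2p-1}$. Any pointwise absorption of $m_\lambda\rho_a\psi^{2p-2}$ into $\epsilon\psi^{2p}+\epsilon m_\lambda\psi^{2p-1}$ leaves a remainder at least of order $m_\lambda\rho_a^{2p-1}\sim t^{-1}\rho_a^{2p-1}$ wherever $\rho_a\ll m_\lambda$. This follows from maximizing $\sigma u^{2p-2}-\epsilon u^{2p-1}$ in $u$ for small $\sigma=\rho_a/m_\lambda$.

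That remainder is not bounded in terms of $\|\rho_a(\nabla f)\|_{2p,f,a}$ at all. Take $\rho_a=s\delta^{-1}$ on $B(x,\delta)$, normalized so that $\|\rho_a\|_{2p}=1$, i.e.\ $s\approx\delta^{(2p-n)/2p}\ll1$. Then $\int_B r^{-1}\rho_a^{2p-1}\,d\vol_f\approx\delta^{-(2p-n)/2p}\to\infty$, since $2p>n$. Consequently your claimed bound $\|\psi\|_{2p,f,a}\le C(n,p)\bigl(\|{\mathrm{Ric}_f^\lambda}_-\|^{1/2}_{p,f,a}+\|\rho_a(\nabla f)\|_{2p,f,a}\bigr)$ does not follow from your argument.

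\textbf{What the paper does instead.} The missing idea is to multiply by $r\psi^{2p-2}\omega_fe^{-ar}$.
\begin{itemize}
\item The bad term becomes $\frac{2r\sn_\lambda'}{\sn_\lambda}\rho_a\psi^{2p-2}$, whose coefficient is bounded by $\max_{[0,R]}2t\sn_\lambda'/\sn_\lambda$.
\item Integrating $r(\psi^{2p-1})'$ by parts costs an extra $-\frac{1}{2p-1}\psi^{2p-1}$. This is dominated by the good term $\bigl(2-\frac{n-1}{2p-1}\bigr)\frac{r\sn_\lambda'}{\sn_\lambda}\psi^{2p-1}$, because $r\sn_\lambda'/\sn_\lambda\ge1$. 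What remains on the left is $\frac{4p-n-2}{2p-1}\psi^{2p-1}$.
\item The estimate closes with $\psi^{2p-2}\rho_a\le\eta\psi^{2p-1}+C_\eta\rho_a^{2p-1}$.
\end{itemize}

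What one controls is $\|r^{1/2p}\psi\|^{2p}_{2p,f,a}+\|\psi\|^{2p-1}_{2p-1,f,a}$, not $\|\psi\|_{2p,f,a}$. Hence at each $R$ at least one of the two pieces is bounded by its matching right-hand side.

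The volume derivative involves $\int_B r\psi e^{-ar}\,d\vol_f$. H\"older applied to whichever piece is controlled therefore bounds $\frac{d}{dR}\bigl(\vol_fB(x,R)/v_a\bigr)^{1/2p}$. In the $L^{2p-1}$ case, the $L^{2p-1}$ norm of $\rho_a$ is converted to its $L^{2p}$ norm at the cost of $\vol_fB(x,R)^{1/(2p(2p-1))}$. This exactly cancels the factor that H\"older on $\psi$ produces. The resulting coefficient is still of order $R^{-n/2p}$, integrable because $p>n/2$. Fed this estimate, your volume-ratio step goes through as you describe.
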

    
    When $\lambda=0$,  the inequality (\ref{YeZ-monotone}) is form-invariant under simultaneous rescalings of  metric and  parameter $a$. More precisely, for any positive  $c\le 1$, 
 (\ref{YeZ-monotone}) for  parameter $a$ and
two geodesic balls  $B(x,R)$ and  $B(x,r)$ on the rescaled weighted manifold
    \[ \frac{1}{c}M=(M^n,\frac{1}{c^2}g,\frac{e^{-f}}{c^n}d\vol_g), 
    \]
   is equivalent to one  for  parameter $a/c$  and two geodesic balls  $B(x, cR)$ and  $B(x, cr)$  on  $(M,g,e^{-f}d\vol)$.
    
    \begin{remark}
    When $|\nabla f|\leq a$ holds pointwise,   $\rho_a(\nabla f)\equiv 0$, and the terms involving $\rho_a(\nabla f)$ disappear in the right-hand side.   Then we deduce that
    \begin{align*}
    	&\left(\frac{\vol_fB(x,R)}{v_a(n,\lambda,R)}\right)^{\frac{1}{2p}}-\left(\frac{\vol_fB(x,r)}{v_a(n,\lambda,r)}\right)^{\frac{1}{2p}}\leq C(n,p,a,\lambda,R)\|{\mathrm{Ric}_f^\lambda}_-\|_{p,f,a}^{\frac{1}{2}}(x,R),
    \end{align*}
    which is analogous to Wu's result in [\cite{Wu}].
    \end{remark}
    \begin{remark}
    	When $\mathrm{Ric}_f\geq (n-1)\lambda g$ holds pointwise, i.e., ${\mathrm{Ric}_f^\lambda}_-\equiv 0$,  we deduce that
    	\begin{align*}
    		\left(\frac{\vol_f B(x,R)}{v_a(n,\lambda,R)}\right)^{\frac{1}{2p}}-\left(\frac{\vol_f B(x,r)}{v_a(n,\lambda,r)}\right)^{\frac{1}{2p}}\leq
    		C(n,p,a,\lambda,R)\|\rho_a(\nabla f)\|_{2p,f,a}(x,R),
    	\end{align*}
    	which can be viewed as an extension of Wei-Wylie's result  in the case of bounded gradient of $f$ [\cite{WW}].
    \end{remark}

   By  (\ref{YeZ-monotone}),  we also see that the volume  ratio of  $\frac{\vol_f B(x,r)}{v_a(n,\lambda,r)}$ is almost
 decreasing    as $r\to 0$ when  the $L^p$-norm of ${\mathrm{Ric}_f^\lambda}_-$ and $L^{2p}$-norm of $\nabla f$ are uniformly bounded as well as  $f$ is uniformly bounded. Then as an application of Theorem \ref{YeZ-theorem}, we give a new proof of the volume comparison for the Kähler–Ricci flow obtained in [\cite{TZZZZ}, Corollary 1.2] as follows. 
 
    \begin{corollary}\label{TZZZZ}
    	Let  $(M, g_0)$  be a Fano manifold of  complex dimension $m=n/2$  with its K\"ahler form $\omega_{g_0}\in 2\pi c_1(M)$,  and $(M, g(t, \cdot))$ ($t>0$)  a  K\"ahler-Ricci flow  with $g_0$ as  an initial metric,
    	\begin{equation}
    	\partial_tg(t,\cdot)=-{\rm Ric}(g(t, \cdot))+g(t,\cdot).  \label{krf}
    	\end{equation}
    	Then  for a given $q\in (n,+\infty)$,   there exists a constant $b_q$ depending only on $q$ and the initial metric $g_0$ such that for any  $0< r\le R \le {\rm diam}(M, g(t))$ it holds
    	\begin{equation}\label{TZZZZ-monotone}
    	\frac{|B(x,R,t)|_{g(t)}}{R^n}-\frac{|B(x,r,t)|_{g(t)}}{r^n}\leq b_qR^{1-\frac{n}{q}},
    	\end{equation}
	where  $|B(x,r,t)|_{g(t)}$ is  the volume of geodesic   $r$-ball associated to metric $g(t)$.
    \end{corollary}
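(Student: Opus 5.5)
The plan is to realize the Kähler–Ricci flow at each fixed time $t$ as a weighted Riemannian manifold to which Theorem \ref{YeZ-theorem} applies with $\lambda=0$ and $p=q/2>n/2$. The ingredients are Perelman's uniform estimates along (\ref{krf}). Let $u=u(t,\cdot)$ be the normalized Ricci potential, i.e.\ $\mathrm{Ric}(g(t))-g(t)=-\sqrt{-1}\partial\bar\partial u$. Perelman gives $|u|+|\nabla u|+|\Delta u|\le C$ and $\mathrm{diam}(M,g(t))\le C$, with $C$ depending only on $g_0$. He also gives $\kappa$-noncollapsing, so $|B(x,r,t)|\ge \kappa r^n$ for $r\le \mathrm{diam}$. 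Together with the upper volume bound, this means every ratio $|B(x,r,t)|/r^n$ lies in a fixed interval $[\kappa,C]$.

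\emph{Step 1 (choice of weight).} I take $f=u$ (after fixing the sign convention so the $J$-invariant part of $\mathrm{Hess}\, f$ cancels $\sqrt{-1}\partial\bar\partial u$). Then
\[
\mathrm{Ric}_f(g(t))=g(t)+(\nabla\nabla u)^{\mathrm{pure}},
\]
where $(\nabla\nabla u)^{\mathrm{pure}}$ is the $J$-anti-invariant ($(2,0)+(0,2)$) part of the Hessian. Hence ${\mathrm{Ric}_f}_-\le |\nabla\nabla u|$. I choose $a:=\sup|\nabla u|\le C$, so that $\rho_a(\nabla f)\equiv 0$. Since $|f|\le C$ and $e^{-a\,d}\le 1$, all weighted norms are comparable to unweighted ones. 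Moreover,
\[
\mathrm{vol}_f B=e^{O(1)}|B|_{g(t)},\qquad v_a(n,0,r)=\omega_n r^n\bigl(1+O(ar)\bigr).
\]
Theorem \ref{YeZ-theorem} therefore gives
\[
\left(\frac{\mathrm{vol}_fB(x,R)}{v_a(n,0,R)}\right)^{\frac1{q}}-\left(\frac{\mathrm{vol}_fB(x,r)}{v_a(n,0,r)}\right)^{\frac1{q}}\le C R^{1-\frac nq}\,\|\nabla\nabla u\|_{q/2}^{1/2}.
\]

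\emph{Step 2 (the integral bound, main obstacle).} I need a uniform-in-$t$ bound on $\|(\nabla\nabla u)^{\mathrm{pure}}\|_{L^{q/2}(g(t))}$. The $L^2$ bound is easy: integrate the Bochner formula,
\[
\int|\nabla\nabla u|^2=\int|\nabla\bar\nabla u|^2-\int \mathrm{Ric}(\nabla u,\bar\nabla u),
\]
and both terms are controlled by $|\Delta u|,|\nabla u|\le C$ and $\int|\mathrm{Ric}|\le C$. However, $q/2>n/2$ is well beyond $L^2$ in general dimension. If a direct $L^{q/2}$ Hessian estimate is not available, my first fallback is to modify the weight by a conformal factor, as in [\cite{TZZZZ}]. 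That is, I would use $\tilde g=e^{2\varphi}g$ with $\varphi$ a bounded multiple of $u$, chosen so that the offending anti-invariant Hessian term is absorbed into $\mathrm{Hess}\,\tilde f$ for $\tilde f=(n-2)\varphi+u$. The resulting $\mathrm{Ric}_{\tilde f}(\tilde g)$ should then be bounded below up to terms involving only $|\nabla u|^2$ and $\Delta u$, which are pointwise bounded. Because $\varphi$ is bounded, $\tilde g$-balls and $g$-balls are nested up to fixed factors, so the conclusion transfers back with constant losses. I expect making this cancellation precise to be the main difficulty.

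\emph{Step 3 (unwinding).} The ratios $X_r=\mathrm{vol}_fB(x,r)/v_a(n,0,r)$ are pinched in a fixed interval $[c,C]$. On such an interval the map $s\mapsto s^{1/q}$ is bi-Lipschitz, so the difference of $q$-th roots controls $X_R-X_r$ up to a constant. Replacing $v_a$ by $\omega_nr^n$ and $\mathrm{vol}_f$ by $e^{-f(x)}|B|_{g(t)}$ costs errors of order $aR$ (from $v_a$) and of order $\mathrm{osc}_{B(x,R)}f\le C|\nabla u|R$ (from the weight). Both are $O(R)\le O(R^{1-n/q})$ since $R\le \mathrm{diam}\le C$. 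Multiplying through by $e^{f(x)}\le C$ yields (\ref{TZZZZ-monotone}) with $b_q$ depending only on $q$ and $g_0$.
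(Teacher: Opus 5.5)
\textbf{There is a genuine gap in your Step 2, and it is exactly where the content of the corollary lies.}

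\emph{The Hessian bound is not available.} Perelman's estimates give $C^0$ control of $u$, $\nabla u$ and $\Delta u$. Through the integrated Bochner formula they give only an $L^2$ bound on $\nabla\nabla u$. A uniform bound on $\|(\nabla\nabla u)^{\mathrm{pure}}\|_{L^{q/2}(g(t))}$ with $q/2>m$, needed for every $q>n$, does not follow from these estimates. You give no argument for it, and for large $q$ it is far stronger than anything known a priori along the flow.

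\emph{The conformal fallback does not repair this.} For $\tilde g=e^{2\varphi}g$ and $\tilde f=(n-2)\varphi+u$ one computes
\[
\mathrm{Ric}_{\tilde f}(\tilde g)=g+(\nabla\nabla u)^{\mathrm{pure}}-\Delta\varphi\, g-(n-2)\,d\varphi\otimes d\varphi-d\varphi\otimes du-du\otimes d\varphi+\langle\nabla\varphi,\nabla u\rangle\, g .
\]
The $\mathrm{Hess}\,\varphi$ terms cancel, but the anti-invariant Hessian of $u$ reappears unchanged. After that cancellation, a conformal change contributes only a multiple of $g$ (namely $-\Delta\varphi\, g$) plus rank-one gradient terms. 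With $\varphi=cu$ all of these are pointwise bounded, so nothing absorbs $(\nabla\nabla u)^{\mathrm{pure}}$. If you drop $u$ from the weight, you are left with $\mathrm{Ric}(g)$ itself, whose negative part has no usable $L^p$ bound for $p>m$.

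\textbf{The missing idea} is to choose the conformal factor so that $-\Delta f$ dominates the curvature defect pointwise. The paper takes $f=\log\phi$, where $\phi$ is the first eigenfunction of $\Delta+V$ with $V=\mathrm{Ric}_-(g)$. Then $\Delta f+|\nabla f|^2+V=\rho_0$, hence $\mathrm{Ric}(g)-\Delta f\,g\ge(|\nabla f|^2-\rho_0)g$. With $\tilde g=e^{2f}g$ and $F=(n-2)f$ this gives $\mathrm{Ric}_F(\tilde g)\ge-(A_0+A_1|\nabla_{\tilde g}f|^2_{\tilde g})\tilde g$.

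The price is that $\nabla f$ is no longer bounded. One only has $\nabla f\in L^p$ for every $p$, together with $|f|\le A$, both from [\cite{TZZZZ}]. This is precisely why Theorem \ref{YeZ-theorem} carries the term $\|\rho_a(\nabla f)\|_{2p,f,a}$ and is applied with $a=0$. Your route uses $\rho_a\equiv 0$, i.e.\ only the bounded-gradient case already covered by [\cite{Wu}], which is a sign that it bypasses the real difficulty.

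The unwinding step changes accordingly. $f$ is not Lipschitz, so instead of $\mathrm{osc}_{B(x,R)}f\le CR$ the paper proves a uniform $C^\alpha$ bound on $f$ for every $\alpha<1$. This comes from the uniform Poincar\'e inequality and the $L^q$ gradient bound, and gives $\mathrm{osc}_{B(x,R)}f\le CR^{1-n/q}$, an error of the right order. Apart from this, your Step 3 is fine; the paper needs only the upper volume bound and the mean value theorem for $s\mapsto s^{1/2p}$.
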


  We note that  (\ref{TZZZZ-monotone}) was   proved  just for $q\in (n, 2n)$
   in [\cite{TZZZZ}, Corollary 1.2] while the present inequality holds for any $q>n$. This means that the  order of error term for the volume monotonicity can be close to the linear order of radius $R$.

    This paper is organized as follows. We first review some basic notation and derive a Riccati-type inequality in Section \ref{sec2}, then we give some types of $L^p$-estimate for the Laplacian of distance function in Section \ref{sec3}. Theorem  \ref{YeZ-theorem} and Corollary \ref{TZZZZ} will be proved in Section \ref{sec4} and Section \ref{sec5},  respectively.

$\mathbf{Acknowledgments.}$ The authors would like to thank Q. Zhang for many valuable conversations in their paper.

\section{Riccati-type inequality}\label{sec2}

Classical volume comparison theorems are often reduced to comparing the mean curvature of geodesic spheres,  i.e.  the Laplacian of distance function.
 In this section, we define a mean curvature error term $\psi$ between a weighted manifold $(M,g,e^{-f}d\vol)$ and the space form $M_\lambda^n$ with constant curvature $\lambda$,
 and  then we derive a Riccati-type inequality for it.

Let $(M^n,g)$ be a complete Riemannian manifold. Fix a point $x\in M$ and consider its segment domain
\begin{align*}
\mathrm{seg}(x):=\{&v\in T_xM\,|\,\operatorname{exp}_x(tv),\,0\leq t\leq 1\,\text{is the}\\ &\text{unique minimizing geodesic from} \,x\,\text{to}\,\operatorname{exp}_x(v)\}.
\end{align*}
On an open set  $\operatorname{exp}_x(\mathrm{seg}(x))\subset M$, we  use  polar coordinates $(r,\theta)\in \mathbb{R}^+\times S^{n-1}$ so that the volume form can be written as
\[d\mathrm{vol}=\omega(r,\theta)dr\wedge d\theta,\]
where $d\theta$ denotes the volume form on the standard unit sphere $(S^{n-1},g_{S^{n-1}})$.
We extend $\omega(r,\theta)$ by zero beyond the cut distance in each direction.

Let $r(\cdot)=d(\cdot,x)$ be a distance function on $M$  from $x$ and $h$ the Laplacian of $r$. An immediate computation in the polar coordinates shows that
\begin{align*}
	\partial_r\omega&=h\omega,\\
	\omega(r,\theta)&=O(r^{n-1}),\, r\to 0,\\
	h(r,\theta)&=\frac{n-1}{r}+O(r),\, r\to 0.
\end{align*}
In particular,  on  $(M_\lambda^n,O)$,  the Laplacian  $h_\lambda$ of distance function  satisfies
\begin{align*}
	h_\lambda(r)=(n-1)\frac{\operatorname{sn}_\lambda'(r)}{\operatorname{sn}_\lambda(r)},
\end{align*}
where $\omega_\lambda(r)=\operatorname{sn}_\lambda^{n-1}(r)$ is the volume element on  the Gauss sphere $\partial B(O,r)$, and the function $\sn_\lambda(r)$ is defined by
\begin{align}
\operatorname{sn}_\lambda(r)=
\begin{cases}
	r,&\lambda=0\\
	\operatorname{sin}(\sqrt{\lambda}r)/\sqrt{\lambda},&\lambda>0\\
	\operatorname{sinh}(\sqrt{-\lambda}r)/\sqrt{-\lambda},&\lambda<0.
\end{cases}
\end{align}

Associated to  BE Ricci curvature, it is natural to consider the following weighted Laplacian
\[
\Delta_f:=\Delta-\nabla f\cdot\nabla,
\]
see [\cite{WZ, TZZ}], etc. Then $\Delta_f r=\Delta r-\partial_rf=h-\partial_rf$,  which is  denoted  by $h_f$ for convenience.  Since the weighted volume form is given by
\[
d\operatorname{vol}_f=e^{-f}\omega(r,\theta) dr\wedge d\theta=\omega_f(r,\theta) dr\wedge d\theta,
\]
It follows that
 $$\partial_r\omega_f=h_f\omega_f.$$

Motivated  by [\cite{PW}],  we compute the mean curvature error term
\begin{equation}
	\psi(r,\theta)=(h_f-h_\lambda-a)_+(r,\theta).\label{error}
\end{equation}
By the Bochner formula, for any function $u\in C^3(M)$, we see that
	\begin{equation}
	\frac{1}{2}\Delta_f|\nabla u|^2=\langle\nabla(\Delta_fu),\nabla u\rangle+|\operatorname{Hess}u|^2+\operatorname{Ric}_f(\nabla u,\nabla u).\label{Bochner}
	\end{equation}
Applying (\ref{Bochner})
 to  the distance function $r$,  it yields
\[
\partial_rh_f+|\operatorname{Hess}r|^2=-\operatorname{Ric}_f(\partial_r,\partial_r).
\]
By choosing  a unit orthonormal frame $(e_i)$ with $e_1=\partial_r$,   it follows  from the Cauchy-Schwarz inequality that
\[
|\operatorname{Hess}r|^2=\sum_{i,j}s_{ij}^2\geq \sum_{i=1}^ns_{ii}^2=\sum_{i=2}^{n}s_{ii}^2\geq \frac{1}{n-1}(\Delta r)^2.
\]
Thus we obtain
\[
h_f'+\frac{h^2}{n-1}\leq -\operatorname{Ric}_f(\partial_r,\partial_r).
\]
Recall that in the model space,
\[
h_\lambda'+\frac{h_\lambda^2}{n-1}=-(n-1)\lambda.
\]
Subtracting this into  the previous inequality gives
\begin{equation}
\left(h_f-h_\lambda\right)'+\frac{1}{n-1}\left(h-h_\lambda\right)\left(h+h_\lambda\right)\leq  {\operatorname{Ric}_f^\lambda}_-.\label{pre-r-ineq}
\end{equation}

\begin{lemma}[Riccati-type Inequality]\label{riccati}
	For  $\lambda\leq 0$, $a\geq 0$ and $r>0$,   we have
	\begin{equation}
		\psi'+\frac{\psi^2}{n-1}\leq -\frac{2\sn_\lambda'}{\sn_\lambda}\psi+\frac{2}{n-1}\rho_a(\nabla f)\psi+\frac{2\sn_\lambda'}{\sn_\lambda}\rho_a(\nabla f)+{\operatorname{Ric}_f^\lambda}_-.\label{rineq}
	\end{equation}
\end{lemma}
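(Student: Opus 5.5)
The plan is to derive \eqref{rineq} directly from \eqref{pre-r-ineq}. I will rewrite everything in terms of $u:=h_f-h_\lambda-a$, so that $\psi=u_+$. Then I split into the region where $\psi>0$, where a pure algebraic manipulation should suffice, and the region where $\psi=0$, where the inequality should be trivial. Throughout I work along a fixed radial geodesic inside the segment domain, where $h$, $h_f$ are smooth in $r$. Hence $\psi$ is Lipschitz in $r$ and $\psi'$ is understood almost everywhere (or in the barrier sense).

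\textbf{Step 1 (substitution).} Since $h=h_f+\partial_r f$, set $w:=a+\partial_r f$. Then
\[
h-h_\lambda=u+w,\qquad h+h_\lambda=u+w+2h_\lambda.
\]
Substituting into \eqref{pre-r-ineq}, and using $(h_f-h_\lambda)'=u'$ and $h_\lambda/(n-1)=\sn_\lambda'/\sn_\lambda$, expanding the product gives
\[
u'+\frac{u^2}{n-1}\le -\frac{2\sn_\lambda'}{\sn_\lambda}u-\frac{2}{n-1}uw-\frac{w^2}{n-1}-\frac{2\sn_\lambda'}{\sn_\lambda}w+{\operatorname{Ric}_f^\lambda}_-.
\]

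\textbf{Step 2 (bounding the $w$-terms where $\psi>0$).} The key observation is that $-w=-a-\partial_rf\le |\nabla f|-a\le \rho_a(\nabla f)$, since $|\partial_r f|\le|\nabla f|$. On $\{u>0\}$ we have $u>0$. For $\lambda\le 0$ we also have $\sn_\lambda'/\sn_\lambda>0$ (as $\sn_\lambda'\ge1$ and $\sn_\lambda>0$ for $r>0$). Therefore
\[
-\tfrac{2}{n-1}uw\le \tfrac{2}{n-1}\rho_a(\nabla f)\,u,\qquad -\tfrac{2\sn_\lambda'}{\sn_\lambda}w\le \tfrac{2\sn_\lambda'}{\sn_\lambda}\rho_a(\nabla f),
\]
and $-w^2/(n-1)\le 0$ can simply be dropped. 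Since $\psi=u$ and $\psi'=u'$ on this open set, \eqref{rineq} follows there. The hypothesis $\lambda\le0$ is used exactly to guarantee the sign of $\sn_\lambda'/\sn_\lambda$ on all of $r>0$.

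\textbf{Step 3 (the set $\psi=0$).} On $\{u\le 0\}$ we have $\psi=0$, and $\psi'=0$ almost everywhere there (a Lipschitz function has zero derivative a.e.\ on its zero set). The right-hand side of \eqref{rineq} reduces to $\frac{2\sn_\lambda'}{\sn_\lambda}\rho_a(\nabla f)+{\operatorname{Ric}_f^\lambda}_-\ge0$. Hence the inequality holds a.e., which is all that is needed later when integrating in $r$.

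The only mildly delicate point is this regularity issue: $\psi$ is merely Lipschitz, being the positive part. I would handle it as in Petersen--Wei, interpreting $\psi'$ a.e.\ along each geodesic up to the cut point, beyond which $\omega_f$ vanishes anyway. The algebra in Steps 1--2 is otherwise routine.
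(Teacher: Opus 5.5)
Your proof is correct and follows the paper's own argument: substitute $h=h_f+\partial_r f$ into (\ref{pre-r-ineq}), drop the $(\partial_r f+a)^2$ term, and use $\partial_r f+a\ge -\rho_a(\nabla f)$ together with $\psi>0$ and $h_\lambda\ge 0$. Your added treatment of the set $\{\psi=0\}$ (where $\psi'=0$ a.e.\ and the right-hand side is nonnegative) spells out what the paper covers only with ``we may assume $\psi>0$''.
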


\begin{proof}
	We may assume that $\psi>0$ and $\psi=h_f-h_\lambda-a$.  Substituting  it into (\ref{pre-r-ineq}) and by the fact  $h=h_f+\partial_rf$,  we have
	\[
	\psi'+\frac{1}{n-1}(\psi+\partial_rf+a)(\psi+2h_\lambda+\partial_rf+a)\leq {\operatorname{Ric}_f^\lambda}_-.
	\]
	Expanding the second  term  at the  left-hand side yields
	\begin{align*}
		\psi'+\frac{\psi^2}{n-1}&\leq -\frac{2\sn_\lambda'}{\sn_\lambda}\psi-\frac{1}{n-1}\big[(\partial_rf+a)^2+(2\psi+2h_\lambda)(\partial_rf+a)\big]+{\operatorname{Ric}_f^\lambda}_-.
	\end{align*}
Note that  $\partial_rf+a\geq -\rho_a(\nabla f)$ and $h_\lambda\geq 0$.   Thus we obtain
\begin{align*}
	\psi'+\frac{\psi^2}{n-1}&\leq -\frac{2\sn_\lambda'}{\sn_\lambda}\psi-\frac{1}{n-1}(2\psi+2h_\lambda)(\partial_rf+a)+{\operatorname{Ric}_f^\lambda}_-\\
	&\leq -\frac{2\sn_\lambda'}{\sn_\lambda}\psi+\frac{1}{n-1}(2\psi+2h_\lambda)\rho_a(\nabla f)+{\operatorname{Ric}_f^\lambda}_-,
\end{align*}
which is equivalent to the desired inequality (\ref{rineq}).
\end{proof}

Compared to the corresponding  Riccati-type inequality considered  in [\cite{WW}] or  [\cite{Wu}],  there are  two new terms $\rho_a(\nabla f)\psi$ and $\frac{\sn_\lambda'}{\sn_\lambda}\rho_a(\nabla f)$ in (\ref{rineq}), which will be handled  in next section.

\section{$L^p$-estimate  for $\psi$}\label{sec3}

In this section, we use Lemma \ref{riccati} to compute   $L^p$-estimate  for $\psi$. The key new ingredients are the use of the term $\frac{2\sn_\lambda'}{\sn_\lambda}\psi$ in (\ref{rineq}) and Young's inequality, which avoids the singular radial term and the mismatch between the powers of the weighted volume elements occurring in the first version of this paper. First we have

\begin{lemma}\label{intesti}
	For any $p>\frac{n}{2}$, $\lambda\leq 0$, $a\geq 0$ and $R>0$, there exist constants $C_1(n,p),\,C_2(n,p)$ and $C_3(n,p)$ such that
	\begin{equation}
		\begin{aligned}
			&\quad\,\int_{0}^Rr\psi^{2p}\omega_fe^{-ar}dr+\int_0^R\psi^{2p-1}\omega_fe^{-ar}dr
			\\&\leq C_1(n,p)R\int_{0}^{R}\rho_a(\nabla f)^{2p}\omega_fe^{-ar}dr\\&+C_2(n,p)R\int_{0}^R\left({\mathrm{Ric}_f^\lambda}_-\right)^{p}\omega_fe^{-ar}dr\\&+C_3(n,p)\left(\max_{0\leq t\leq R}\frac{2t\sn_\lambda'(t)}{\sn_\lambda(t)}\right)^{2p-1}\int_{0}^R\rho_a(\nabla f)^{2p-1}\omega_fe^{-ar}dr,\label{esti1}
		\end{aligned}
	\end{equation}
	where the angular coordinates $\theta$ are fixed.
\end{lemma}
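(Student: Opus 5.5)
We multiply the Riccati-type inequality (\ref{rineq}) of Lemma \ref{riccati} by $r\psi^{2p-2}\omega_f e^{-ar}$ and integrate in $r$ from $0$ to $R$, with $\theta$ fixed. This follows the Petersen--Wei scheme, adapted to the weight $\omega_f e^{-ar}$.

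\textbf{Integration by parts on the $\psi'$ term.} Using $\partial_r\omega_f=h_f\omega_f$, we write
\[
\int_0^R r\psi^{2p-2}\psi'\omega_f e^{-ar}dr=\frac{1}{2p-1}\Big[r\psi^{2p-1}\omega_fe^{-ar}\Big]_0^R-\frac{1}{2p-1}\int_0^R\psi^{2p-1}\big(1+r h_f-ar\big)\omega_fe^{-ar}dr .
\]
The boundary term at $0$ vanishes because $\omega_f=O(r^{n-1})$ and $\psi$ is bounded near $0$. The term at $R$ is nonnegative and can be dropped. Beyond the cut locus $\omega_f=0$, and the jump there has the right sign, as in Petersen--Wei.

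Where $\psi>0$ we have $h_f-a=\psi+h_\lambda$. Hence $r(h_f-a)=r\psi+rh_\lambda$, with $rh_\lambda=(n-1)r\sn_\lambda'/\sn_\lambda\ge n-1$. Moving these contributions to the left gives
\[
\Big(\frac{1}{n-1}-\frac{1}{2p-1}\Big)\int r\psi^{2p}\omega_fe^{-ar}\;+\;\int\psi^{2p-1}\Big(\frac{1+rh_\lambda}{2p-1}+\frac{2r\sn_\lambda'}{\sn_\lambda}\Big)\omega_fe^{-ar}
\]
on the left-hand side. Here the term $\frac{2\sn_\lambda'}{\sn_\lambda}\psi$ from (\ref{rineq}) has also been moved to the left, which is the key new ingredient. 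The first coefficient is positive exactly when $2p>n$. The second coefficient is at least $\frac{n}{2p-1}+2>0$, so it controls $\int\psi^{2p-1}\omega_fe^{-ar}$. Using $r\sn_\lambda'/\sn_\lambda\ge 1$ keeps it bounded below.

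\textbf{Remaining terms on the right.} There are three of them.

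First, $\frac{2}{n-1}\int r\rho_a\psi^{2p-1}$. By Young's inequality with exponents $2p$ and $\frac{2p}{2p-1}$, this is at most $\epsilon\int r\psi^{2p}+C_\epsilon\int r\rho_a^{2p}\le\epsilon\int r\psi^{2p}+C_\epsilon R\int\rho_a^{2p}$.

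Second, $\int r\,{\mathrm{Ric}_f^\lambda}_-\psi^{2p-2}$. Young's inequality with exponents $p$ and $\frac{p}{p-1}$ bounds it by $\epsilon\int r\psi^{2p}+C_\epsilon R\int({\mathrm{Ric}_f^\lambda}_-)^p$.

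Third, $\int \frac{2r\sn_\lambda'}{\sn_\lambda}\rho_a\psi^{2p-2}$. This is the delicate term, since it carries no factor $r$ to pair with $r\psi^{2p}$. I would bound $\frac{2r\sn_\lambda'}{\sn_\lambda}\le K:=\max_{[0,R]}\frac{2t\sn_\lambda'}{\sn_\lambda}$. Then Young's inequality with exponents $\frac{2p-1}{2p-2}$ and $2p-1$ gives $K\rho_a\psi^{2p-2}\le\epsilon\psi^{2p-1}+C_\epsilon K^{2p-1}\rho_a^{2p-1}$, and the $\epsilon\psi^{2p-1}$ piece is absorbed into the $\psi^{2p-1}$ term on the left. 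This produces the $C_3$ term exactly as stated.

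\textbf{Conclusion.} Choosing $\epsilon$ small depending on $n,p$ absorbs all the $\epsilon$-terms, which yields (\ref{esti1}). The main obstacle I expect is the careful justification at $r=0$ and at the cut point. At $r=0$, the factor $r$ kills the singular $h_\lambda\sim(n-1)/r$ behavior. At the cut point, we need the sign of the discarded boundary terms, handled as in [\cite{PW}] by noting that $\psi\omega_f$ only drops there. A second concern is the sign bookkeeping, which must ensure the $\psi^{2p-1}$ coefficient stays uniformly positive.
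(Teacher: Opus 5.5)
Your route is the paper's: multiply (\ref{rineq}) by $r\psi^{2p-2}\omega_fe^{-ar}$, integrate by parts using $\partial_r\omega_f=h_f\omega_f$, substitute $h_f-a=\psi+h_\lambda$ on $\{\psi>0\}$, move $\frac{2\sn_\lambda'}{\sn_\lambda}\psi$ to the left, and finish with the same three Young inequalities. Your choice of first bounding $\frac{2r\sn_\lambda'}{\sn_\lambda}\le K$ and then fixing $\epsilon=\epsilon(n,p)$ is equivalent to the paper's $\eta\sim 1/K$ and gives the same $K^{2p-1}$.

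However, the step that carries the real content has a sign error. Your integration by parts correctly puts $-\frac{1}{2p-1}\int_0^R\psi^{2p-1}(1+r\psi+rh_\lambda)\omega_fe^{-ar}dr$ on the left. So the $(1+rh_\lambda)\psi^{2p-1}$ piece enters with a minus sign, exactly like the $r\psi^{2p}$ piece, which you did handle correctly. The coefficient of $\psi^{2p-1}\omega_fe^{-ar}$ on the left is therefore
\[
\frac{2r\sn_\lambda'}{\sn_\lambda}-\frac{1+(n-1)r\sn_\lambda'/\sn_\lambda}{2p-1}
=\frac{4p-n-1}{2p-1}\,\frac{r\sn_\lambda'}{\sn_\lambda}-\frac{1}{2p-1}.
\]
It is not $\frac{1+rh_\lambda}{2p-1}+\frac{2r\sn_\lambda'}{\sn_\lambda}$, and your claimed lower bound $\frac{n}{2p-1}+2$ is false.

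The repair is immediate. Since $p>n/2$ gives $4p-n-1>0$, you may apply $r\sn_\lambda'/\sn_\lambda\ge 1$ to the combined expression. This yields the coefficient $\ge\frac{4p-n-2}{2p-1}$, which is positive because $4p>2n\ge n+2$. That is exactly the paper's constant, and with it your absorption of $\epsilon\int\psi^{2p-1}\omega_fe^{-ar}$ and the rest of the argument go through unchanged.

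The corrected sign is precisely why the $\frac{2\sn_\lambda'}{\sn_\lambda}\psi$ term is indispensable. Without it the coefficient would be $-\frac{1+(n-1)r\sn_\lambda'/\sn_\lambda}{2p-1}<0$. A term $\int\psi^{2p-1}\omega_fe^{-ar}$ with no factor $r$ cannot be absorbed directly into $\int r\psi^{2p}\omega_fe^{-ar}$. In your bookkeeping that term would have been harmless even without the "key ingredient", which should have been a warning sign. It also means the hypothesis $p>n/2$ is used here, not only for the $r\psi^{2p}$ coefficient.
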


\begin{proof}
	Multiplying both sides of  (\ref{rineq}) in Lemma \ref{riccati} by $r\psi^{2p-2}\omega_fe^{-ar}$, we have
	\begin{align*}
		&\frac{1}{2p-1}r(\psi^{2p-1})'\omega_fe^{-ar}+\frac{1}{n-1}r\psi^{2p}\omega_fe^{-ar}\\&\leq -\frac{2r\sn_\lambda'(r)}{\sn_\lambda(r)}\psi^{2p-1}\omega_fe^{-ar}+\frac{2}{n-1}r\psi^{2p-1}\rho_a(\nabla f)\omega_fe^{-ar}\\&+\frac{2r\sn_\lambda'(r)}{\sn_\lambda(r)}\psi^{2p-2}\rho_a(\nabla f)\omega_fe^{-ar}+r\psi^{2p-2}{\mathrm{Ric}^\lambda_f}_-\omega_fe^{-ar}.
	\end{align*}
	Observe that
	\begin{align*}
		&\quad r(\psi^{2p-1})'\omega_fe^{-ar}\\&=(r\psi^{2p-1}\omega_fe^{-ar})'-\psi^{2p-1}\omega_fe^{-ar}-r\psi^{2p-1}h_f\omega_fe^{-ar}+ar\psi^{2p-1}\omega_fe^{-ar}\\
		&\geq (r\psi^{2p-1}\omega_fe^{-ar})'-\psi^{2p-1}\omega_fe^{-ar}-r\psi^{2p}\omega_fe^{-ar}-(n-1)\frac{r\sn_\lambda'(r)}{\sn_\lambda(r)}\psi^{2p-1}\omega_fe^{-ar},
	\end{align*}
	where we used $h_f\leq \psi+h_\lambda+a$. Substituting this into the previous inequality, it yields
	\begin{equation*}
		\begin{aligned}
			&\quad\,\frac{1}{2p-1}(r\psi^{2p-1}\omega_fe^{-ar})'+\left(\frac{1}{n-1}-\frac{1}{2p-1}\right)r\psi^{2p}\omega_fe^{-ar}\\
			&+\left(2-\frac{n-1}{2p-1}\right)\frac{r\sn_\lambda'(r)}{\sn_\lambda(r)}\psi^{2p-1}\omega_fe^{-ar}-\frac{1}{2p-1}\psi^{2p-1}\omega_fe^{-ar}\\
			&\leq \frac{2}{n-1}r\psi^{2p-1}\rho_a(\nabla f)\omega_fe^{-ar}+\frac{2r\sn_\lambda'(r)}{\sn_\lambda(r)}\psi^{2p-2}\rho_a(\nabla f)\omega_fe^{-ar}\\
			&+r\psi^{2p-2}{\mathrm{Ric}^\lambda_f}_-\omega_fe^{-ar}.
		\end{aligned}
	\end{equation*}
	Since $\lambda\leq 0$, we have $\frac{r\sn_\lambda'(r)}{\sn_\lambda(r)}\geq 1$, and $2-\frac{n-1}{2p-1}>0$ by $p>\frac{n}{2}$. Thus
	\begin{equation}
		\begin{aligned}
			&\quad\,\frac{1}{2p-1}(r\psi^{2p-1}\omega_fe^{-ar})'+\frac{2p-n}{(n-1)(2p-1)}r\psi^{2p}\omega_fe^{-ar}\\
			&+\frac{4p-n-2}{2p-1}\psi^{2p-1}\omega_fe^{-ar}\\
			&\leq \frac{2}{n-1}r\psi^{2p-1}\rho_a(\nabla f)\omega_fe^{-ar}+\frac{2r\sn_\lambda'(r)}{\sn_\lambda(r)}\psi^{2p-2}\rho_a(\nabla f)\omega_fe^{-ar}\\
			&+r\psi^{2p-2}{\mathrm{Ric}^\lambda_f}_-\omega_fe^{-ar}.\label{rineq-deal}
		\end{aligned}
	\end{equation}
	
	Integrating the inequality (\ref{rineq-deal}) gives
	\begin{align*}
		&\quad\,\frac{1}{2p-1}r\psi^{2p-1}\omega_fe^{-ar}\bigg|_0^R+\frac{2p-n}{(n-1)(2p-1)}\int_{0}^{R}r\psi^{2p}\omega_fe^{-ar}dr\\
		&+\frac{4p-n-2}{2p-1}\int_0^R\psi^{2p-1}\omega_fe^{-ar}dr\\
		&\leq \frac{2}{n-1}\int_{0}^Rr\psi^{2p-1}\rho_a(\nabla f)\omega_fe^{-ar}dr+\int_{0}^Rr\psi^{2p-2}{\mathrm{Ric}^\lambda_f}_-\omega_fe^{-ar}dr\\
		&+\max_{0\leq t\leq R}\frac{2t\sn_\lambda'(t)}{\sn_\lambda(t)}\int_{0}^R\psi^{2p-2}\rho_a(\nabla f)\omega_fe^{-ar}dr.
	\end{align*}
	Since $\omega_f=O(r^{n-1})$ as $r\to 0$, we have
	$r\psi^{2p-1}\omega_fe^{-ar}\big|_0^R\geq 0$.  Note that
	$\frac{2p-n}{(n-1)(2p-1)}>0$ and $\frac{4p-n-2}{2p-1}>0$ by $p>n/2$.  Thus  we derive
	\begin{equation}
		\begin{aligned}
			&\frac{2p-n}{(n-1)(2p-1)}\int_{0}^{R}r\psi^{2p}\omega_fe^{-ar}dr+\frac{4p-n-2}{2p-1}\int_0^R\psi^{2p-1}\omega_fe^{-ar}dr\\
			&\leq \frac{2}{n-1}\int_{0}^Rr\psi^{2p-1}\rho_a(\nabla f)\omega_fe^{-ar}dr+\int_{0}^Rr\psi^{2p-2}{\mathrm{Ric}^\lambda_f}_-\omega_fe^{-ar}dr\\
			&+\max_{0\leq t\leq R}\frac{2t\sn_\lambda'(t)}{\sn_\lambda(t)}\int_{0}^R\psi^{2p-2}\rho_a(\nabla f)\omega_fe^{-ar}dr.
		\end{aligned}
		\label{rineq-int}
	\end{equation}
	On the other hand, by Young's inequality, we can see that
	\begin{align}
		r\psi^{2p-1}\rho_a(\nabla f)&\leq \epsilon\,r \psi^{2p}+C_\epsilon\,r\rho_a(\nabla f)^{2p} ,\label{young1}\\
		r\psi^{2p-2}{\mathrm{Ric}_f^\lambda}_-&\leq \epsilon\,
		r\psi^{2p}+D_\epsilon\,r({\mathrm{Ric}_f^\lambda}_-)^p ,\label{young2}\\
		\psi^{2p-2}\rho_a(\nabla f) &\leq \eta\, \psi^{2p-1}+C_\eta\,\rho_a(\nabla f)^{2p-1},\label{young3}
	\end{align}
	where $\epsilon,\,\eta >0$ can be chosen small enough, and
	\[
	C_\epsilon=\frac{(2p-1)^{2p-1}}{(2p)^{2p}\epsilon^{2p-1}},\, D_\epsilon=\frac{(p-1)^{p-1}}{p^p\epsilon^{p-1}},\, C_\eta=\frac{(2p-2)^{2p-2}}{(2p-1)^{2p-1}\eta^{2p-2}}.
	\]
	Hence, substituting (\ref{young1}), (\ref{young2}) and (\ref{young3}) into (\ref{rineq-int}), we obtain
	\begin{equation*}
		\begin{aligned}
			&\quad\,\left(\frac{2p-n}{(n-1)(2p-1)}-\frac{n+1}{n-1}\epsilon\right)\int_{0}^{R}r\psi^{2p}\omega_fe^{-ar}dr\\
			&+\left(\frac{4p-n-2}{2p-1}-\eta\,\max_{0\leq t\leq R}\frac{2t\sn_\lambda'(t)}{\sn_\lambda(t)}\right)\int_0^R\psi^{2p-1}\omega_fe^{-ar}dr\\
			&\leq \frac{2C_\epsilon}{n-1}\int_{0}^Rr\rho_a(\nabla f)^{2p}\omega_fe^{-ar}dr+D_\epsilon\int_{0}^Rr\left({\mathrm{Ric}^\lambda_f}_-\right)^p\omega_fe^{-ar}dr\\
			&+C_\eta\,\max_{0\leq t\leq R}\frac{2t\sn_\lambda'(t)}{\sn_\lambda(t)}\int_{0}^R\rho_a(\nabla f)^{2p-1}\omega_fe^{-ar}dr.
		\end{aligned}
	\end{equation*}
	Since $\epsilon$ and $\eta$ are small enough, we derive
	\begin{equation*}
		\begin{aligned}
			&\quad\,\int_{0}^{R}r\psi^{2p}\omega_fe^{-ar}dr+\int_0^R\psi^{2p-1}\omega_fe^{-ar}dr\\
			&\leq C_1(n,p)\int_{0}^Rr\rho_a(\nabla f)^{2p}\omega_fe^{-ar}dr\\
			&+C_2(n,p)\int_{0}^Rr\left({\mathrm{Ric}^\lambda_f}_-\right)^p\omega_fe^{-ar}dr\\
			&+C_3(n,p)\left(\max_{0\leq t\leq R}\frac{2t\sn_\lambda'(t)}{\sn_\lambda(t)}\right)^{2p-1}\int_{0}^R\rho_a(\nabla f)^{2p-1}\omega_fe^{-ar}dr,
		\end{aligned}
	\end{equation*}
	which proves the inequality (\ref{esti1}).
\end{proof}

By Lemma \ref{intesti}, we get the following
$L^{2p}$-estimate and $L^{2p-1}$-estimate for $\psi$.
\begin{proposition}\label{meancomp}
	Let $(M,g,e^{-f}d\mathrm{vol},x)$ be a pointed weighted Riemannian manifold. Then for $\lambda\leq 0$, $p>\frac{n}{2}$ and $a\geq 0$, there exist constants $C_1(n,p)$ and $C_2(n,p)$ such that
	\begin{equation}
		\begin{aligned}
			&\|r^{\frac{1}{2p}}\psi\|^{2p}_{2p,f,a}(x,R)+\|\psi\|^{2p-1}_{2p-1,f,a}(x,R)\\
			&\leq C_1(n,p)R\left(\|\rho_a(\nabla f)\|^{2p}_{2p,f,a}(x,R)+\|{\mathrm{Ric}_f^\lambda}_-\|^p_{p,f,a}(x,R)\right)\\ &+C_2(n,p)\left(\max_{0\leq t\leq R}\frac{2t\sn_\lambda'(t)}{\sn_\lambda(t)}\right)^{2p-1}\|\rho_a(\nabla f)\|^{2p-1}_{2p-1,f,a}(x,R).
		\end{aligned}\label{meancomp-ineq}
	\end{equation}
	In particular, for any $R>0$, one of the following
	estimates must hold: either
	\begin{align}
		\|r^{\frac{1}{2p}}\psi\|_{2p,f,a}(x,R)
		\le C_1(n,p)\,R^{\frac{1}{2p}}\left(\|\rho_a(\nabla f)\|_{2p,f,a}+\|{\mathrm{Ric}_f^\lambda}_-\|^{\frac{1}{2}}_{p,f,a}\right),\label{meancomp-ineq1}
	\end{align}
	or
	\begin{align}
		\|\psi\|_{2p-1,f,a}(x,R)
		\le C_2(n,p)\max_{0\leq t\leq R}\frac{2t\sn_\lambda'(t)}{\sn_\lambda(t)}\|\rho_a(\nabla f)\|_{2p-1,f,a}(x,R).\label{meancomp-ineq2}
	\end{align}
\end{proposition}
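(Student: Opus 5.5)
The plan is to integrate the fiberwise estimate (\ref{esti1}) of Lemma \ref{intesti} over the unit sphere $S^{n-1}$ in the angular variable $\theta$, and then read off the two alternatives by a case split.

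\emph{Step 1: integrating Lemma \ref{intesti} over directions.} Since $\omega_f(r,\theta)$ is extended by zero beyond the cut distance, and the cut locus has measure zero, for any measurable $\phi$ we have
\[\int_{S^{n-1}}\int_0^R\phi\,\omega_fe^{-ar}\,dr\,d\theta=\int_{B(x,R)}\phi\, e^{-a\,d(x,\cdot)}d\vol_f.\]
Two points need care before integrating in $\theta$.

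First, Lemma \ref{intesti} is a one-dimensional inequality along each ray. The integration by parts in its proof should be done on $[0,\min\{R,c(\theta)\}]$, where $c(\theta)$ is the cut distance. The boundary term $r\psi^{2p-1}\omega_fe^{-ar}$ is nonnegative at the upper endpoint and vanishes at $0$. It is discarded, so the inequality still holds with integrals over $[0,R]$ once $\omega_f$ is extended by zero.

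Second, the factor $R$ on the right of (\ref{esti1}) really comes from $\int_0^R r(\cdots)dr\le R\int_0^R(\cdots)dr$. Integrating in $\theta$ then gives the integrands $r^{\frac{1}{2p}\cdot 2p}\psi^{2p}=r\psi^{2p}$ and $\psi^{2p-1}$ on the left. On the right we obtain $R\|\rho_a(\nabla f)\|^{2p}_{2p,f,a}$, $R\|{\mathrm{Ric}_f^\lambda}_-\|^p_{p,f,a}$, and the maximum term times $\|\rho_a(\nabla f)\|^{2p-1}_{2p-1,f,a}$. Renaming $\max(C_1,C_2)$ as $C_1$ and $C_3$ as $C_2$ yields (\ref{meancomp-ineq}).

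\emph{Step 2: the dichotomy.} Write the left side as $A+B$, with $A=\|r^{\frac{1}{2p}}\psi\|^{2p}_{2p,f,a}$ and $B=\|\psi\|^{2p-1}_{2p-1,f,a}$. Write the right side as $X+Y$, with $X=C_1R(\|\rho_a\|^{2p}+\|\mathrm{Ric}\|^p)$ and $Y=C_2(\max\cdots)^{2p-1}\|\rho_a\|^{2p-1}$. Then either $X\ge Y$, so $A\le 2X$, or $Y>X$, so $B\le 2Y$.

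In the first case, take $2p$-th roots and use $(s^{2p}+t^{2p})^{1/2p}\le s+t$ with $s=\|\rho_a\|$ and $t=\|\mathrm{Ric}\|^{1/2}$. This gives (\ref{meancomp-ineq1}) with a new constant $(2C_1)^{1/2p}$.

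In the second case, take $(2p-1)$-th roots. This gives (\ref{meancomp-ineq2}) with constant $(2C_2)^{1/(2p-1)}$.

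\emph{Main obstacle.} The main point to verify is the justification across the cut locus, together with the measurability of $\psi$ there. This uses that $\psi$ enters only through $\omega_f$-weighted integrals, which see only the segment domain. The rest is bookkeeping of constants, which still depend only on $n$ and $p$.
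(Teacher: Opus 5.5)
Your proposal is correct and follows the paper's proof: you integrate (\ref{esti1}) over $S^{n-1}$ to obtain (\ref{meancomp-ineq}), then split into two cases and take the $2p$-th and $(2p-1)$-th roots. The factor of $2$ in your case split is harmless but unnecessary, since $A+B\le X+Y$ already forces $A\le X$ or $B\le Y$, which is how the paper states the dichotomy; your treatment of the cut locus makes explicit what the paper leaves implicit.
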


\begin{proof}
	Integrating (\ref{esti1}) along angular coordinates, we obtain (\ref{meancomp-ineq}). It follows from (\ref{meancomp-ineq}) that at least one of the following inequalities holds: either
	\begin{align}
		\|r^{\frac{1}{2p}}\psi\|^{2p}_{2p,f,a}(x,R)\leq C_1(n,p)R\left(\|\rho_a(\nabla f)\|^{2p}_{2p,f,a}+\|{\mathrm{Ric}_f^\lambda}_-\|^p_{p,f,a}\right) \label{alt1}
	\end{align}
	or	
	\begin{align}
		\|\psi\|^{2p-1}_{2p-1,f,a}(x,R)\leq C_2(n,p)\left(\max_{0\leq t\leq R}\frac{t\sn_\lambda'(t)}{\sn_\lambda(t)}\right)^{2p-1}\|\rho_a(\nabla f)\|^{2p-1}_{2p-1,f,a}.\label{alt2}
	\end{align}
	If (\ref{alt1}) holds, then taking the $2p$-th root and using $\left(a^{2p}+b^{2p}\right)^{1/2p}\leq a+b$ ($a,b>0$), we obtain (\ref{meancomp-ineq1}).
	If (\ref{alt2}) holds, then taking the $(2p-1)$-th root we obtain (\ref{meancomp-ineq2}).
\end{proof}

\section{Proof of Theorem \ref{YeZ-theorem} }\label{sec4}

In this section, we prove Theorem  \ref{YeZ-theorem} by the $L^{p}$-estimate of $\psi$  in Proposition \ref{meancomp}.

As in [\cite{PW, Wu}], we need to compute the ratio of volume as follows.

\begin{lemma}\label{volratio}
	Fix $x\in M$. For any $\lambda\leq 0$, $a\geq 0$ and $R>0$ we have
	\begin{equation}
	\frac{d}{dR}\left(\frac{\vol_fB(x,R)}{v_a(n,\lambda,R)}\right)\leq \frac{e^{2aR}\omega_\lambda(R)|S^{n-1}|}{v_a(n,\lambda,R)^2}\int_{B(x,R)}r\psi  e^{-ar} d\operatorname{vol}_f.\label{volratio-ineq}
	\end{equation}
\end{lemma}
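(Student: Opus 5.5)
We follow the Petersen--Wei argument: pass to polar coordinates about $x$, differentiate the ratio with the quotient rule, and compare the weighted area element $\omega_f(t,\theta)$ with the model element $e^{at}\omega_\lambda(t)$ one direction at a time.

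\textbf{Derivative of the ratio.} Write
\[
\vol_f B(x,R)=\int_{S^{n-1}}\int_0^R\omega_f(t,\theta)\,dt\,d\theta,
\qquad
v_a(n,\lambda,R)=|S^{n-1}|\int_0^R e^{at}\omega_\lambda(t)\,dt .
\]
Since $\omega_f$ is set to zero past the cut locus and only drops to zero there, the $R$-derivative of $\vol_f B(x,R)$ is at most $\int_{S^{n-1}}\omega_f(R,\theta)\,d\theta$. The quotient rule then gives
\[
\frac{d}{dR}\frac{\vol_fB}{v_a}\le \frac{|S^{n-1}|}{v_a^2}\int_{S^{n-1}}\int_0^R\Big(\omega_f(R,\theta)e^{at}\omega_\lambda(t)-\omega_f(t,\theta)e^{aR}\omega_\lambda(R)\Big)\,dt\,d\theta .
\]

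\textbf{Pointwise comparison along a ray.} Fix $\theta$. On the segment domain $\partial_r\omega_f=h_f\omega_f$ and $h_f\le h_\lambda+a+\psi$, so
\[
\frac{d}{dr}\Big(\frac{\omega_f}{e^{ar}\omega_\lambda}\Big)=(h_f-h_\lambda-a)\frac{\omega_f}{e^{ar}\omega_\lambda}\le \psi\,\frac{\omega_f}{e^{ar}\omega_\lambda}.
\]
For $t<R$, integrating from $t$ to $R$ (the cut-locus truncation only lowers the left side) gives
\[
\frac{\omega_f(R)}{e^{aR}\omega_\lambda(R)}-\frac{\omega_f(t)}{e^{at}\omega_\lambda(t)}\le\int_t^R\psi(s)\frac{\omega_f(s)}{e^{as}\omega_\lambda(s)}\,ds .
\]
Multiplying by $e^{aR}\omega_\lambda(R)\,e^{at}\omega_\lambda(t)$ bounds the integrand above by
\[
e^{aR}\omega_\lambda(R)\,e^{at}\omega_\lambda(t)\int_t^R\psi(s)\,\omega_f(s)\,e^{-as}\omega_\lambda(s)^{-1}\,ds .
\]

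\textbf{Fubini and the main obstacle.} Integrate in $t\in[0,R]$ and swap the order, so that $s$ ranges over $[0,R]$ and $t$ over $[0,s]$. For $\lambda\le0$ the function $\omega_\lambda$ is increasing, so
\[
\int_0^s e^{at}\omega_\lambda(t)\,dt\le s\,e^{as}\omega_\lambda(s).
\]
The right side therefore becomes at most
\[
e^{aR}\omega_\lambda(R)\int_0^R s\,\psi(s)\,\omega_f(s)\,ds .
\]
This step is where the bound is tight: we need the weight $s e^{-as}$ to match the right side of (\ref{volratio-ineq}), but the estimate only gives the weight $s$. We use $e^{aR}\le e^{2aR}e^{-as}$ for $s\le R$, and this is exactly the source of the factor $e^{2aR}$. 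Integrating over $\theta$ then turns $\int_{S^{n-1}}\int_0^R s\,\psi\,e^{-as}\omega_f\,ds\,d\theta$ into $\int_{B(x,R)}r\psi e^{-ar}\,d\vol_f$. Combined with the quotient-rule bound, this gives (\ref{volratio-ineq}).

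The remaining technical point is the cut locus. The comparison for $\omega_f/(e^{ar}\omega_\lambda)$ holds for $r$ below the cut distance. Beyond it $\omega_f=0$, so either both terms involving $\omega_f$ vanish, or only the first-term positivity is affected; in each case the inequality is preserved in the favorable direction.
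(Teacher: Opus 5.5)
Your proposal is correct and follows the paper's proof essentially step for step. Both arguments use the quotient rule, then Newton--Leibniz along each ray for $\omega_f/(e^{ar}\omega_\lambda)$ with $h_f-h_\lambda-a\le\psi$, then Fubini, and finish with a crude bound on the model ratio. The differences are cosmetic: you use $\int_0^s e^{at}\omega_\lambda(t)\,dt\le s\,e^{as}\omega_\lambda(s)$ and reinsert $e^{-as}$ via $1\le e^{a(R-s)}$, whereas the paper first pulls out $e^{aR}$ and uses $\int_0^s\omega_\lambda\le s\,\omega_\lambda(s)$; both give the same factor $e^{2aR}$. You also treat the cut-locus truncation explicitly, which the paper leaves implicit.
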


\begin{proof}
	Since
	 \begin{align*}
	 \frac{d}{dR}\vol_fB(x,R)&=\frac{d}{dR}\int_{0}^{R}\int_{S^{n-1}}\omega_f(r,\theta)d\theta dr=\int_{S^{n-1}}\omega_f(R,\theta)d\theta
	\end{align*}
	and
	\begin{align*}
	 \frac{d}{dR}v_a(n,\lambda,R)&=\frac{d}{dR}\int_{0}^{R}\int_{S^{n-1}}\omega_\lambda^a(r)d\theta dr=\int_{S^{n-1}}\omega_\lambda^a(R)d\theta,
	 \end{align*}
	 where $\omega_\lambda^a(r)=e^{ar}\omega_\lambda(r)$, then we have
	\begin{align*}
		&\frac{d}{dR}\left(\frac{\vol_fB(x,R)}{v_a(n,\lambda,R)}\right)\\
		&=\frac{\int_{S^{n-1}}\omega_f(R,\theta)d\theta\int_{0}^{R}\int_{S^{n-1}}\omega^a_\lambda(r)d\theta dr-\int_{0}^{R}\int_{S^{n-1}}\omega_f(r,\theta)d\theta dr \int_{S^{n-1}}\omega^a_\lambda(R)d\theta}{v_a(n,\lambda,R)^2}\\
		&=\frac{|S^{n-1}|\omega^a_\lambda(R)}{v_a(n,\lambda,R)^2}\int_{0}^{R}\omega^a_\lambda(r)\int_{S^{n-1}}\left(\frac{\omega_f(R,\theta)}{\omega^a_\lambda(R)}-\frac{\omega_f(r,\theta)}{\omega^a_\lambda(r)}\right)d\theta dr.
	\end{align*}
By Newton-Leibniz formula and the fact $(\omega_f/\omega^a_\lambda)'=(h_f-h_\lambda-a)\cdot\omega_f/\omega^a_\lambda$, it follows that
\begin{align*}
		&\frac{d}{dR}\left(\frac{\vol_fB(x,R)}{v_a(n,\lambda,R)}\right)\\
		&=\frac{|S^{n-1}|\omega^a_\lambda(R)}{v_a(n,\lambda,R)^2}\int_{0}^{R}\omega^a_\lambda(r)\int_{S^{n-1}}\int_{r}^{R}\left(\frac{\omega_f}{\omega^a_\lambda}\right)'(s)dsd\theta dr\\
		&=\frac{|S^{n-1}|\omega^a_\lambda(R)}{v_a(n,\lambda,R)^2}\int_{0}^{R}\omega^a_\lambda(r)\int_{S^{n-1}}\int_{r}^{R}(h_f-h_\lambda-a)\frac{\omega_f(s,\theta)}{\omega^a_\lambda(s)}dsd\theta dr.
\end{align*}
By exchanging the order of integration, we get
\begin{align*}
	&\frac{d}{dR}\left(\frac{\vol_fB(x,R)}{v_a(n,\lambda,R)}\right)\\
	&=\frac{\omega_\lambda^a(R)|S^{n-1}|}{v_a(n,\lambda,R)^2}\int_{S^{n-1}}\int_{0}^R\int_{r}^{R}(h_f-h_\lambda-a)\frac{\omega_f(s,\theta)}{\omega^a_\lambda(s)}\omega^a_\lambda(r)dsdrd\theta\\
	&=\frac{\omega_\lambda^a(R)|S^{n-1}|}{v_a(n,\lambda,R)^2}\int_{S^{n-1}}\int_{0}^R(h_f-h_\lambda-a)(s)\frac{\omega_f(s,\theta)}{\omega^a_\lambda(s)}\int_{0}^{s}\omega^a_\lambda(r)drdsd\theta\\
	&\leq \frac{\omega_\lambda^a(R)|S^{n-1}|}{v_a(n,\lambda,R)^2}\int_{S^{n-1}}\int_{0}^R(h_f-h_\lambda-a)_+(s)s\omega_f(s,\theta)\frac{\int_{0}^{s}\omega^a_\lambda(r)dr}{s\omega^a_\lambda(s)}dsd\theta.
\end{align*}
Thus,
\begin{align*}
		&\quad\,\frac{d}{dR}\left(\frac{\vol_fB(x,R)}{v_a(n,\lambda,R)}\right)\\&\leq \frac{e^{aR}\omega_\lambda(R)|S^{n-1}|}{v_a(n,\lambda,R)^2}\left(\max_{0\leq s\leq R}\frac{\int_{0}^{s}\omega_\lambda^a(r)dr}{s\omega_\lambda(s)}\right)\int_{S^{n-1}}\int_{0}^Rs\psi(s,\theta) e^{-as} \omega_f(s,\theta)dsd\theta.
	\end{align*}
Note that $\lambda\leq 0$ and $a\geq 0$, then 
\[
\frac{\int_{0}^{s}\omega_\lambda^a(r)dr}{s\omega_\lambda(s)}\leq e^{aR}\frac{\int_0^s\omega_\lambda(r)dr}{s\omega_\lambda(s)}\leq e^{aR},\quad \forall \,0\leq s\leq R.
\]
Hence we obtain (\ref{volratio-ineq}) from the above immediately.
\end{proof}

Furthermore, we have

\begin{proposition}\label{reduce}
	For $p>n/2$, $\lambda\leq 0$ and $a\geq 0$, we have
	\begin{equation}
		\begin{aligned}
			\frac{d}{dR}\left(\frac{\vol_fB(x,R)}{v_a(n,\lambda,R)}\right)^\frac{1}{2p}&\leq \frac{C(n,p)Re^{2aR}\omega_\lambda(R)}{\left(\int_{0}^R\omega^a_\lambda(r)dr\right)^{1+\frac{1}{2p}}}\\
			&\times\min\bigg\{R^{-\frac{1}{2p}}\|r^{\frac{1}{2p}}\psi\|_{2p,f,a}(x,R),\\
			&\big[\vol_fB(x,R)\big]^{\frac{-1}{2p(2p-1)}}\|\psi\|_{2p-1,f,a}(x,R)\bigg\}. \label{reduce-ineq}
		\end{aligned}
	\end{equation}
\end{proposition}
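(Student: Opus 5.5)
The plan is to combine the chain rule with Lemma \ref{volratio} and then bound $\int_{B(x,R)} r\psi e^{-ar}d\vol_f$ by Hölder's inequality in two different ways, one for each entry of the minimum. Write $V=\vol_fB(x,R)$ and $v=v_a(n,\lambda,R)=|S^{n-1}|\int_0^R\omega^a_\lambda(r)dr$. Then
\[
\frac{d}{dR}\Big(\frac{V}{v}\Big)^{\frac1{2p}}=\frac{1}{2p}\Big(\frac{V}{v}\Big)^{\frac1{2p}-1}\frac{d}{dR}\Big(\frac{V}{v}\Big).
\]
Where the derivative of $V/v$ is nonpositive there is nothing to prove, so assume it is positive. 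By (\ref{volratio-ineq}) the derivative is bounded by
\[
\frac{e^{2aR}\omega_\lambda(R)|S^{n-1}|}{2p\,v^{1+\frac1{2p}}}\,V^{\frac1{2p}-1}\int_{B(x,R)}r\psi e^{-ar}d\vol_f.
\]
The goal is to show that $V^{\frac1{2p}-1}\int_{B(x,R)} r\psi e^{-ar}d\vol_f$ is bounded by $C\,R$ times each of the two expressions in the minimum. After that, the powers of $|S^{n-1}|$ are absorbed into $C(n,p)$.

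\emph{First bound.} Write $r\psi e^{-ar}=r^{1-\frac1{2p}}\cdot\big(r^{\frac1{2p}}\psi e^{-\frac{ar}{2p}}\big)\cdot e^{-ar(1-\frac1{2p})}$. Apply Hölder with exponents $2p$ and $\frac{2p}{2p-1}$, using $r\le R$ and $e^{-ar}\le 1$:
\[
\int_{B(x,R)} r\psi e^{-ar}d\vol_f\le R^{1-\frac1{2p}}\,\|r^{\frac1{2p}}\psi\|_{2p,f,a}(x,R)\,V^{1-\frac1{2p}}.
\]
Multiplying by $V^{\frac1{2p}-1}$ exactly cancels the power of $V$. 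This leaves $R\cdot R^{-\frac1{2p}}\|r^{\frac1{2p}}\psi\|_{2p,f,a}$, which is the first entry.

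\emph{Second bound.} Bound $r\le R$ and apply Hölder with exponents $2p-1$ and $\frac{2p-1}{2p-2}$, again using $e^{-ar}\le e^{-ar/(2p-1)}$:
\[
\int_{B(x,R)} r\psi e^{-ar}d\vol_f\le R\,\|\psi\|_{2p-1,f,a}(x,R)\,V^{\frac{2p-2}{2p-1}}.
\]
Multiplying by $V^{\frac1{2p}-1}$ gives the exponent
\[
\frac{1}{2p}-1+\frac{2p-2}{2p-1}=\frac1{2p}-\frac{1}{2p-1}=-\frac{1}{2p(2p-1)},
\]
which is precisely the power of $\vol_fB(x,R)$ in the second entry.

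Since both bounds hold simultaneously, the derivative is bounded by the minimum, which gives (\ref{reduce-ineq}). The only delicate point is this bookkeeping of powers of $V$, in particular checking that the second Hölder split produces exactly the exponent $-\frac{1}{2p(2p-1)}$. Choosing the Hölder splits so that the weights $e^{-ar}$ match the definition of $\|\cdot\|_{p,f,a}$ is routine, because $e^{-ar}\le 1$ lets us discard any excess weight.
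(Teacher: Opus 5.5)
Your proposal is correct and is essentially the paper's proof: the chain rule applied to Lemma \ref{volratio}, then the two H\"older splits with exponents $2p$ and $2p-1$, with the same bookkeeping of powers of $\vol_fB(x,R)$ and the factor $|S^{n-1}|^{-1/(2p)}/(2p)$ absorbed into $C(n,p)$. Your case distinction on the sign of $\frac{d}{dR}(V/v)$ is harmless but unnecessary, since the chain-rule factor $\frac{1}{2p}(V/v)^{\frac{1}{2p}-1}$ is positive and multiplying by it preserves the inequality in either case.
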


\begin{proof}
	By Lemma \ref{volratio} and  H\"{o}lder's inequality,
	\begin{align*}
		\frac{d}{dR}\left(\frac{\vol_fB(x,R)}{v_a(n,\lambda,R)}\right)\leq \frac{e^{2aR}\omega_\lambda(R)|S^{n-1}|}{v_a(n,\lambda,R)^2}\|r^{\frac{1}{2p}}\psi\|_{2p,f,a}\,R^{1-\frac{1}{2p}}\big[\vol_fB(x,R)\big]^{1-\frac{1}{2p}},
	\end{align*}
	thus
	\begin{equation}
	\frac{d}{dR}\left(\frac{\vol_fB(x,R)}{v_a(n,\lambda,R)}\right)^{\frac{1}{2p}}\leq \frac{C_1(n,p)Re^{2aR}\omega_\lambda(R)}{v_a(n,\lambda,R)^{1+\frac{1}{2p}}}R^{\frac{-1}{2p}}\|r^{\frac{1}{2p}}\psi\|_{2p,f,a}.\label{reduce-ineq1}
	\end{equation}
	On the other hand, we also have
	\[
	\frac{d}{dR}\left(\frac{\vol_fB(x,R)}{v_a(n,\lambda,R)}\right)\leq \frac{e^{2aR}\omega_\lambda(R)|S^{n-1}|}{v_a(n,\lambda,R)^2}\|\psi\|_{2p-1,f,a}R\,\big[\vol_fB(x,R)\big]^{1-\frac{1}{2p-1}},
	\]
	which follows that
	\begin{equation}
		\begin{aligned}
			&\frac{d}{dR}\left(\frac{\vol_fB(x,R)}{v_a(n,\lambda,R)}\right)^{\frac{1}{2p}}\\
			&\leq \frac{C_2(n,p)Re^{2aR}\omega_\lambda(R)}{v_a(n,\lambda,R)^{1+\frac{1}{2p}}}\|\psi\|_{2p-1,f,a}\big[\vol_fB(x,R)\big]^{\frac{-1}{2p(2p-1)}}.\label{reduce-ineq2}
		\end{aligned}
	\end{equation}
	Combine (\ref{reduce-ineq1}) and (\ref{reduce-ineq2}), we obtain (\ref{reduce-ineq}).
\end{proof}

Now we can finish the proof of  Theorem  \ref{YeZ-theorem}.

\begin{proof}[Proof of Theorem  \ref{YeZ-theorem}]
	From Proposition \ref{meancomp}, we know that either
	\[
	R^{\frac{-1}{2p}}\|r^{\frac{1}{2p}}\psi\|_{2p,f,a}(x,R)
	\le C_1(n,p)\,\left(\|\rho_a(\nabla f)\|_{2p,f,a}+\|{\mathrm{Ric}_f^\lambda}_-\|^{\frac{1}{2}}_{p,f,a}\right) 
	\]
	or
	\begin{align*}
	&\big[\vol_fB(x,R)\big]^{\frac{-1}{2p(2p-1)}}\|\psi\|_{2p-1,f,a}(x,R)\\
	&\le C_2(n,p)\max_{0\leq t\leq R}\frac{2t\sn_\lambda'(t)}{\sn_\lambda(t)}\|\rho_a(\nabla f)\|_{2p-1,f,a}(x,R)\big[\vol_fB(x,R)\big]^{\frac{-1}{2p(2p-1)}}\\
	&\le C_2(n,p)\max_{0\leq t\leq R}\frac{2t\sn_\lambda'(t)}{\sn_\lambda(t)}\|\rho_a(\nabla f)\|_{2p,f,a}(x,R)\\
	&\le C_2(n,p)\max_{0\leq t\leq R}\frac{2t\sn_\lambda'(t)}{\sn_\lambda(t)}\,\left(\|\rho_a(\nabla f)\|_{2p,f,a}+\|{\mathrm{Ric}_f^\lambda}_-\|^{\frac{1}{2}}_{p,f,a}\right).
	\end{align*}
	In the last second line we used H\"{o}lder's inequality. Combined with Proposition \ref{reduce}, we obtain that for any $R>0$, the following inequality holds:
	\begin{equation}
		\begin{aligned}
			&\quad\,\frac{d}{dR}\left(\frac{\vol_fB(x,R)}{v_a(n,\lambda,R)}\right)^\frac{1}{2p}\\
			&\leq \frac{C(n,p)Re^{2aR}\omega_\lambda(R)}{\left(\int_{0}^R\omega^a_\lambda(r)dr\right)^{1+\frac{1}{2p}}}\max_{0\leq t\leq R}\frac{t\sn_\lambda'(t)}{\sn_\lambda(t)}\\
			&\times \left(\|\rho_a(\nabla f)\|_{2p,f,a}(x,R)+\|{\mathrm{Ric}_f^\lambda}_-\|_{p,f,a}^{\frac{1}{2}}(x,R)\right).
		\end{aligned}\label{deri}
	\end{equation}
	Clearly, 
	\[
	\frac{C(n,p)Re^{2aR}\omega_\lambda(R)}{\left(\int_{0}^R\omega^a_\lambda(r)dr\right)^{1+\frac{1}{2p}}}\max_{0\leq t\leq R}\frac{t\sn_\lambda'(t)}{\sn_\lambda(t)}\sim C(n,p)\,R^{-\frac{n}{2p}}
	\]
	as $R\to 0$. Since $p>n/2$, the right-hand side of (\ref{deri}) is integrable. Hence, by integrating (\ref{deri}) from $r$ to $R$, we get the desired inequality
	\begin{equation*}
		\begin{aligned}
			&\left(\frac{\vol_f B(x,R)}{v_a(n,\lambda,R)}\right)^{\frac{1}{2p}}-\left(\frac{\vol_f B(x,r)}{v_a(n,\lambda,r)}\right)^{\frac{1}{2p}}\\
			&\leq
			C(n,p,a,\lambda,R)\left(\|\rho_a(\nabla f)\|_{2p,f,a}(x,R)+\|{\mathrm{Ric}_f^\lambda}_-\|^{\frac{1}{2}}_{p,f,a}(x,R)\right),
		\end{aligned}
	\end{equation*}
 where the constant  $C(n,p,a,\lambda,R)$ is given by
 \begin{equation}
C(n,p)\max_{0\leq t\leq R}\frac{t\sn_\lambda'(t)}{\sn_\lambda(t)}\,\int_{0}^{R}\frac{re^{2ar}\omega_\lambda(r)}{\left(\int_{0}^{r}\omega_\lambda^a(s)ds\right)^{1+\frac{1}{2p}}}dr. \label{coeffi}
 \end{equation}
 Moreover,  $C(n,p,a,\lambda,R)=O(R^{1-\frac{n}{2p}})$ as $R\to 0$.
\end{proof}

\begin{remark}
	By (\ref{coeffi}), one can   estimate the growth of $C(n,p,a,\lambda,R)$ as $R\to\infty$ as well.
\end{remark}

\section{An application}\label{sec5}

In this  section, we apply Theorem \ref{YeZ-theorem} to  prove  Corollary \ref{TZZZZ} for  the evolved metrics along the K\"ahler-Ricci flow (\ref{krf}). First we note that Zhang [\cite{Zhang}] proved that
\begin{align}\label{Zhang}|B(x, r, t)|_{g(t)} \leq c_1 r^n, ~\forall ~r\le {\rm diam}(M, g(t)),
\end{align}
where $c_1>0$ is a uniform constant independent of $t$.  By Perelman's non-collapsing result in [\cite{Pe}]  and the scalar curvature estimate [\cite{Pe2}] (also see [\cite{ST}]),   the uniform lower bound of volume is true,
\begin{align}\label{Pere}
	|B(x, r, t)|_{g(t)} \ge c_2 r^n, ~\forall ~r\le {\rm diam}(M, g(t)).
\end{align}
The diameter of $(M, g(t))$ is also uniformly bounded [\cite{Pe2}].

The potential function $f=f_t$ for  BE Ricci curvature in (\ref{potential}) associated to the metric $g=g(t)$  has been  constructed  in a recent work of Tian-Zhang-Zhang-Zhu-Zhu [\cite{TZZZZ}]  as follows.

Let $V=\mathrm{Ric}_-(g)\ge 0$. As in [\cite{TZZZZ}], let $\phi>0$ be the normalized first eigenfunction of
\begin{equation}\label{eigen}
	\Delta\phi+V\phi-\rho_0\phi=0,
\end{equation}
where $-\rho_0$ is the first eigenvalue of $-\Delta-V$, and set $f=\log\phi$. Then
\begin{equation}\label{eigenfcn}
	\Delta f+|\nabla f|^2+V=\rho_0,
\end{equation}
with $\rho_0\le B_0$ uniformly in $t$, which was proved in [\cite{TZZZZ}]. Moreover, we have 

\begin{lemma}\label{f}
	There exist positive constants $A$, $B_p\,(p\geq 1)$ and $C_\alpha\,(0<\alpha<1)$, only depending on the initial metric  $g_0$ and $p$ or $\alpha$  such that
	\begin{equation}\label{f1}
		|f(x)|\leq A,\quad \forall\, x\in M,
	\end{equation}
	\begin{equation}\label{f2}
		\|\nabla f\|_{L^p(M)}\leq B_p, \quad\forall\, p\geq 1,
	\end{equation}
	and
	\begin{equation}\label{f3}
		\|f\|_{C^{\alpha}(M)}\leq C_\alpha,\quad\forall\, 0<\alpha<1.
	\end{equation}
\end{lemma}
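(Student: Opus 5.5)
The plan is to work at a fixed time $t$ with $g=g(t)$, and to use only estimates that are uniform in $t$. These are Perelman's bounds on the scalar curvature and the diameter, the two-sided volume bounds (\ref{Zhang}) and (\ref{Pere}), and the uniform Sobolev inequality along the Kähler--Ricci flow, which I take from [\cite{TZZZZ}]/Zhang. I also take from [\cite{TZZZZ}] the bound $\rho_0\le B_0$ and a uniform bound $\|V\|_{L^{q_0}(M,g(t))}\le C$ for some $q_0>n/2$; the latter is what is needed to treat $V=\mathrm{Ric}_-$ as a potential in the Schrödinger operator. The statement is entirely about the positive eigenfunction $\phi$ of (\ref{eigen}) with $f=\log\phi$, so I would prove it by De Giorgi--Nash--Moser theory for $\Delta+V$.

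\emph{Step 1: $|f|\le A$.} Since $\Delta\phi=(\rho_0-V)\phi\ge -V\phi-|\rho_0|\phi$ and $V\in L^{q_0}$ with $q_0>n/2$, Moser iteration with the uniform Sobolev inequality gives $\sup_M\phi\le C\|\phi\|_{L^2}=C$, using the normalization. Applying the weak Harnack inequality for the supersolution $\phi$ of $\Delta\phi+V\phi\le \rho_0^+\phi$ on balls of a fixed radius, and chaining these finitely many times (possible because the diameter is bounded and volumes are noncollapsed), gives $\inf_M\phi\ge c\sup_M\phi$. Together with $\|\phi\|_{L^2}=1$ and the volume bounds this yields $c'\le\phi\le C$, so $|f|=|\log\phi|\le A$. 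The same Moser/De Giorgi argument gives a Hölder modulus $\mathrm{osc}_{B(y,r)}\phi\le Cr^{\alpha_0}$. To push this to every $\alpha<1$, I would use a Campanato iteration. Comparing $\phi$ with the harmonic replacement on $B(y,r)$, the error is controlled by $\|V\phi\|_{L^{q}}r^{2-n/q}$, which is possible for any $q$ for which $V\in L^q$ uniformly; I expect such a bound for $q$ near $n$, so that $2-n/q$ is close to $1$. Since $\phi$ is bounded above and below, this gives (\ref{f3}) for $f=\log\phi$.

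\emph{Step 2: $\|\nabla f\|_{L^p}\le B_p$.} The case $p=2$ is immediate: integrating (\ref{eigenfcn}) over $M$ kills $\Delta f$, so $\int|\nabla f|^2=\rho_0\mathrm{vol}-\int V\le B_0 c_1$. For larger $p$, I would first get $\nabla\phi\in L^p$ and then use $\nabla f=\nabla\phi/\phi$ with $\phi\ge c'$. The route I would try first is to multiply $\Delta\phi=(\rho_0-V)\phi$ by $\phi|\nabla\phi|^{2k-2}$-type test functions, or equivalently by cutoffs times $(\phi-\bar\phi_B)$ on small balls. This gives Caccioppoli/Morrey estimates $\int_{B(y,r)}|\nabla\phi|^2\le Cr^{n-2+2\alpha}$ for all $\alpha<1$ from Step 1. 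I would then upgrade these Morrey bounds to $L^p$ bounds via a reverse-Hölder (Gehring) argument with the Sobolev inequality, iterating in $p$.

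I expect this gradient step to be the main obstacle. Naive Bochner or integration-by-parts arguments produce $\mathrm{Hess} f$ or $\nabla V$ terms that we do not control. Calderón--Zygmund $W^{2,q}$ estimates would require uniform harmonic-coordinate control, and that is not available along the flow. If the Gehring route does not reach every $p$, the fallback is to extract the $L^p$ bound directly from the Campanato decay in Step 1 on a cover by balls of radius $r$, summed over a dyadic range of radii. This relies on the fact that the exponents in Step 1 are available for all $\alpha<1$.
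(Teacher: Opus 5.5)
Your argument has genuine gaps, and it runs the key implication in the opposite direction from the paper.

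\textbf{The gaps.} First, everything in your Steps 1--2 rests on a uniform bound $\|V\|_{L^{q_0}(M,g(t))}\le C$ with $q_0>n/2$. For (\ref{f3}) with $\alpha$ near $1$ you need even more, namely $V\in L^{q}$ uniformly with $q$ close to $n$. Neither bound is stated in the paper, which imports from [\cite{TZZZZ}] only $\rho_0\le B_0$, (\ref{f1}) and (\ref{f2}), and you give no justification. It is also exactly what the conformal Bakry--\'Emery detour of Section \ref{sec5} is designed to avoid: with such a bound, Theorem \ref{PW} together with (\ref{Zhang}) would give (\ref{TZZZZ-monotone}) (with $q=2q_0$) directly for $g(t)$.

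Second, even granting the bound, your Campanato step only reaches exponents near $1$ if harmonic replacements on $(M,g(t))$ satisfy energy decay of order $r^{n-2+2\alpha}$ with $\alpha$ near $1$. That is essentially a Cheng--Yau type gradient estimate, which needs a Ricci lower bound. De Giorgi--Nash--Moser supplies only some small $\alpha_0$.

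Third, the gradient step does not close. Caccioppoli gives only Morrey bounds $\int_{B(y,r)}|\nabla\phi|^2\le Cr^{n-2+2\alpha}$, and Morrey spaces $L^{2,\lambda}$ with $\lambda<n$ are not contained in $L^p_{\rm loc}$ for any $p>2$. Gehring's lemma yields only $\nabla\phi\in L^{2+\varepsilon}$ for some small $\varepsilon$ depending on the constants, and there is no mechanism to iterate it up to every $p$. Your fallback of extracting $L^p$ bounds from Campanato decay fails for the same reason: H\"older continuity of every order $\alpha<1$ carries no information on $\|\nabla f\|_{L^p}$ (Weierstrass-type functions are $C^\alpha$ for all $\alpha<1$ but not in $W^{1,1}$).

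\textbf{What the paper does.} It does not reprove (\ref{f1}) and (\ref{f2}); they are quoted from [\cite{TZZZZ}, Proposition 2.2(a)] and [\cite{TZZZZ}, Proposition 2.3]. Only (\ref{f3}) is proved, and it is deduced \emph{from} (\ref{f2}) by a Morrey-type embedding that uses no information on $V$ at all:
\begin{itemize}
\item Set $B_j=B(x,2^{-j}R)$. The uniform $L^2$-Poincar\'e inequality of [\cite{TZq}], H\"older's inequality and the two-sided volume bounds (\ref{Zhang}), (\ref{Pere}) give $|f_{B_j}-f_{B_{j+1}}|\le C(2^{-j}R)^{1-n/q}\|\nabla f\|_{L^q(M)}$.
\item Summing over $j$ gives $|f(x)-f_{B(x,R)}|\le CR^{1-n/q}\|\nabla f\|_{L^q(M)}$.
\item Comparing averages over $B(y,d)\subset B(x,2d)$, with $d=d(x,y)$, yields $|f(x)-f(y)|\le C_q\,d(x,y)^{1-n/q}$.
\end{itemize}
Since (\ref{f2}) holds for every $q$, every $\alpha=1-n/q<1$ is obtained. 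So the gradient integrability is the input and H\"older continuity is the cheap output. To prove the statement, you should take (\ref{f1})--(\ref{f2}) from [\cite{TZZZZ}] and run this Poincar\'e--telescoping argument, rather than trying to go from H\"older information back to gradient bounds.
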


By a standard smooth approximation of $V$, we may assume that the solution  $f$ of (\ref{eigenfcn}) is smooth and all estimates  of $f$ in Lemma \ref {f} hold.

\begin{proof}[Proof of Lemma \ref{f}] (\ref{f1}) and (\ref{f2}) were proved in  [\cite{TZZZZ}, Proposition 2.2(a)] and [\cite{TZZZZ}, Proposition 2.3], respectively. So it remains to prove (\ref{f3}). 
	
	 Fix $q>n$. Let $B_j=B(x,2^{-j}R)$ and let $f_j=f_{B_j}$ be the average of $f$ over $B_j$. Then by Cauchy-Schwarz inequality,
	\begin{align*}
		|f_j-f_{j+1}|&\leq \fint_{B_{j+1}}|f-f_j|\leq \frac{1}{|B_{j+1}|^{1/2}}\left(\int_{B_{j}}|f-f_j|^2\right)^{\frac{1}{2}}.
	\end{align*}
	Since the following  uniform $L^2$-Poincar\'{e} inequality  holds along the K\"{a}hler-Ricci flow [\cite{TZq}],
	\[
	\left(\int_{B_{j}}|f-f_j|^2\right)^{\frac{1}{2}}\leq C\cdot 2^{-j}R\left(\int_{B_{j}}|\nabla f|^2\right)^{\frac{1}{2}},
	\]
by  H\"{o}lder's inequality with helps of  (\ref{Zhang}) and (\ref{Pere}),  we get
	\begin{align*}
		|f_j-f_{j+1}|&\leq \frac{C\cdot2^{-j}R}{|B_{j+1}|^{1/2}}\left(\int_{B_{j}}|\nabla f|^q\right)^{\frac{1}{q}}|B_j|^{\frac{1}{2}-\frac{1}{q}}\leq \tilde{C}(2^{-j}R)^{1-\frac{n}{q}}\|\nabla f\|_{L^q(M)}.
	\end{align*}
	Summing over $j$  it yields
	\begin{equation}\label{poin1}
	|f(x)-f_{B(x,R)}|\leq \tilde{C}_qR^{1-\frac{n}{q}}\|\nabla f\|_{L^q(M)},
	\end{equation}
	where $f_{B(x,R)}$ is the average of $f$ over $B(x,R)$.
	
	For any two points  $x,\,y\in M$,   let $B=B(x,2d)$ and $B_y=B(y,d)$, where $d=d(x,y)$. Then $B_y\subset B$.  Thus similar to (\ref{poin1}), we have
	\begin{align*}
		|f_B-f_{B_y}|\leq\frac{1}{|B_y|}\int_{B_y}|f-f_B| &\leq \frac{1}{|B_y|^{1/2}}\left(\int_{B}|f-f_B|^2\right)^{\frac{1}{2}}\\
		&\leq C_qd^{1-\frac{n}{q}}\|\nabla f\|_{L^q(M)}.
	\end{align*}
	Hence again  applying (\ref{poin1}) to $B$ and $B_y$, by  (\ref{f2}), we obtain 
	\[
	|f(x)-f(y)|\leq |f(x)-f_B|+|f_B-f_{B_y}|+|f(y)-f_{B_y}|\leq C_q\,d(x,y)^{1-\frac{n}{q}}.
	\]
 (\ref{f3}) is proved.
\end{proof}

For each $t>0$, define
\[
\tilde g(t)=e^{2f_t}g(t),
\quad F=(n-2)f_t,\quad |B(x,r,t)|_{\tilde{g},F}=\int_{B_{\tilde{g}}(x,r,t)}e^{-F}d\vol_{\tilde{g}}.
\]
Since $|f|$ is uniformly bounded, $g$ and $\tilde g$ are uniformly equivalent. 
\begin{lemma}\label{ball}
	For any $x\in M$ and $R>0$, let
	$k_R=\min_{B_g(x,R)}f$, $K_R=\max_{B_g(x,R)}f$.
	Then
	\[
	e^{-2K_R}|B(x,e^{k_R}R,t)|_{\tilde{g},F}\leq |B(x,R,t)|_{g}\leq e^{-2k_R}|B(x,e^{K_R}R,t)|_{\tilde{g},F}.
	\]
\end{lemma}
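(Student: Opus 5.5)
The plan is to compare first the measures and then the balls, using only the definitions of $\tilde g$ and $F$. The point that makes everything work is that the weight $e^{-F}$ was chosen to turn the $\tilde g$-measure back into a bounded multiple of the $g$-measure.

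For the measures: since $\tilde g=e^{2f}g$ in real dimension $n$, we have $d\vol_{\tilde g}=e^{nf}d\vol_g$. Hence
\[
e^{-F}d\vol_{\tilde g}=e^{-(n-2)f}e^{nf}d\vol_g=e^{2f}d\vol_g .
\]
Consequently, on $B_g(x,R)$ we get $e^{2k_R}d\vol_g\le e^{-F}d\vol_{\tilde g}\le e^{2K_R}d\vol_g$. The extrema may be taken over the closed ball; by continuity this changes nothing.

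For the balls, I will prove the two inclusions
\[
B_{\tilde g}(x,e^{k_R}R)\subset B_g(x,R)\subset B_{\tilde g}(x,e^{K_R}R).
\]
Both follow from $L_{\tilde g}(\gamma)=\int e^{f(\gamma)}|\gamma'|_g$.

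For the second inclusion, take $y$ with $d_g(x,y)<R$. A minimizing $g$-geodesic from $x$ to $y$ stays inside $B_g(x,R)$, where $f\le K_R$. Its $\tilde g$-length is therefore at most $e^{K_R}d_g(x,y)<e^{K_R}R$.

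For the first inclusion, I argue by contraposition. Suppose $d_g(x,y)\ge R$, and let $\gamma$ be any curve from $x$ to $y$. Consider the initial portion of $\gamma$ up to its first hitting time of $\partial B_g(x,R)$. This portion lies in $\overline{B_g(x,R)}$, where $f\ge k_R$, and it has $g$-length at least $R$. So $L_{\tilde g}(\gamma)\ge e^{k_R}R$, hence $d_{\tilde g}(x,y)\ge e^{k_R}R$, and $y\notin B_{\tilde g}(x,e^{k_R}R)$.

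Combining the two steps gives both inequalities.

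Lower bound: since $B_{\tilde g}(x,e^{k_R}R)\subset B_g(x,R)$,
\[
|B(x,e^{k_R}R,t)|_{\tilde g,F}=\int_{B_{\tilde g}(x,e^{k_R}R)}e^{2f}d\vol_g\le e^{2K_R}|B(x,R,t)|_g .
\]

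Upper bound: since $B_g(x,R)\subset B_{\tilde g}(x,e^{K_R}R)$,
\[
|B(x,R,t)|_g\le e^{-2k_R}\int_{B_g(x,R)}e^{2f}d\vol_g\le e^{-2k_R}|B(x,e^{K_R}R,t)|_{\tilde g,F}.
\]

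The only delicate point is the first inclusion. There one must use only the portion of the curve inside the $g$-ball, because $f$ is controlled by $k_R$ only on $\overline{B_g(x,R)}$ and not along the whole curve. The first-exit-time argument handles this, and the rest is bookkeeping.
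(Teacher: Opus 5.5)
Your proof is correct and follows the same route as the paper. Both arguments combine the ball inclusions $B_{\tilde g}(x,e^{k_R}R)\subset B_g(x,R)\subset B_{\tilde g}(x,e^{K_R}R)$, which come from $d\tilde s=e^{f}ds$, with the measure identity $d\vol_g=e^{-2f}e^{-F}d\vol_{\tilde g}$ and the bounds $k_R\le f\le K_R$; your first-exit-time argument just supplies the detail for the first inclusion that the paper calls ``easy to see.''
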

\begin{proof}
	By $d\tilde s=e^f ds$,  it is easy to see that	
	\[
	B_{\tilde{g}}(x,e^{k}R,t)\subset B_{g}(x,R,t)\subset B_{\tilde{g}}(x,e^{K}R,t).
	\]
	Moreover,
	\[
	d\vol_g=e^{-2f}e^{-F}d\vol_{\widetilde g}.
	\]
	Thus integrating over the above equality and using $k_R\le f\le K_R$ we prove the lemma.
\end{proof}

\begin{proof}[Proof of Corollary \ref{TZZZZ}]
	By  a formula of  Ricci tensor  for conformal metric  (cf. [\cite{Be}]), we have 
	\begin{align*}
		\mathrm{Ric}(\tilde{g})+(n-2) {\rm Hess}_g(f)=\mathrm{Ric}(g)-\Delta f\cdot g +(n-2)df\otimes df -(n-2)|\nabla f|^2 g.
	\end{align*}
	   Since
	   \[
\operatorname{Hess}_{\tilde{g}}(f)=\operatorname{Hess}_{g}(f)-2df\otimes df+|\nabla f|^2g,
\]
we get
	\begin{equation*}
	\operatorname{Ric}_F(\tilde{g})=\operatorname{Ric}(g)-\Delta f\cdot g-(n-2)df\otimes df.
	\end{equation*}
Since by (\ref{eigenfcn}), 
  $$\operatorname{Ric}(g)-\Delta f\cdot g\geq (-V-\Delta f)g\geq (-B_0+|\nabla f|^2)g, $$
	we derive
	 \begin{equation}\label{zhu}
		\mathrm{Ric}_{F}(\tilde{g})\geq -\left(A_0 + A_1|\nabla_{\tilde g}   f|_{\tilde g}^2\right)\tilde g,
	\end{equation}
	where $A_0=e^{2A}B_0$, $A_1=\max\{n-3,0\}$.  On the other hand, by Lemma \ref{f} together with the uniform bound of diameter,    for each $p\geq 1$, it holds
	\begin{align}\label{bound}
	\|{\operatorname{Ric}_{F}}_-(\tilde{g})\|^{\frac{1}{2}}_{p,F}(M)+\|\nabla_{\tilde{g}}F\|_{2p,F}(M)\leq D_p,
	\end{align}
where $D_p$ is independent of $t$.  Hence   applying Theorem \ref{YeZ-theorem} to $(M,\tilde{g},e^{-F}d\vol_{\tilde{g}})$ with $a=\lambda=0$ and $p>m$, 
  we obtain
	\begin{align}\label{p-volume}
			\left(\frac{ |B(x,R,t)|_{\tilde{g}, F}}{R^n}\right)^{\frac{1}{2p}}-\left(\frac{|B(x,r,t)|_{\tilde{g}, F}}{r^n}\right)^{\frac{1}{2p}}\leq C(n,p)\,R^{1-\frac{n}{2p}}\,D_p.
		\end{align}
By Lemma \ref{ball} and (\ref{Zhang}),  we see that
\[
\frac{|B(x,R,t)|_{\tilde{g},F}}{R^n}\leq e^{(n+2)A}\frac{|B(x,e^{A}R,t)|_g}{(e^{A}R)^n}\leq c_1\,e^{(n+2)A}.
\]
Applying the mean value theorem to $x^{\frac{1}{2p}}$, it follows that
	\begin{align*}
	&\quad\,\left(\frac{|B(x,R,t)|_{\tilde{g}, F}}{R^n}\right)^{\frac{1}{2p}}-\left(\frac{|B(x,r,t)|_{\tilde{g}, F}}{r^n}\right)^{\frac{1}{2p}}\\&\geq\frac{(c_1\,e^{(n+2)A})^{\frac{1}{2p}-1}}{2p}\left(\frac{|B(x,R,t)|_{\tilde{g}, F}}{R^n}-\frac{|B(x,r,t)|_{\tilde{g}, F}}{r^n}\right).
	\end{align*}
Consequently,  by (\ref{p-volume}) there exists a uniform constant $b_p$ depending only on $p$ and the initial metric $g_0$ such that
	\begin{align}\label{p-volume-2}
	\frac{|B(x,R,t)|_{\tilde{g},F}}{R^n}-\frac{|B(x,r,t)|_{\tilde{g},F}}{r^n}\leq b_pR^{1-\frac{m}{p}}.
	\end{align}
 Let
\[
k=\min_{B_g(x,R)}f,\quad K=\max_{B_g(x,R)}f.
\]
 Then  applying  Lemma \ref{ball} to $e^K R$ and $e^k r$  instead of $R$ and $r$ respectively,  
 we get from (\ref{p-volume-2}), 
\[
e^{2k-nK}\frac{|B(x,R,t)|_g}{R^n}-e^{2K-nk}\frac{|B(x,r,t)|_g}{r^n}\leq b_p ( e^KR)^{1-\frac{m}{p}}.
\]
Thus
\begin{align}
&e^{2k-nK}\bigg[\frac{|B(x,R,t)|_g}{R^n}-\frac{|B(x,r,t)|_g}{r^n}\bigg]\notag\\
&\le (e^{2K-nk}-e^{2k-nK})\frac{|B(x,r,t)|_g}{r^n}
+ b_p(e^KR)^{1-\frac{m}{p}}.\notag\\
&\le  (n+2)e^{(n+2)A}C_p R^{1-\frac{m}{p}}+b_p (e^KR)^{1-\frac{m}{p}}\notag\\
&\le b_p'R^{1-\frac{m}{p}},\notag
\end{align}
where $A$ and $C_p$ are two constants determined in (\ref{f1}) and (\ref{f3}), respectively.
In the last second inequality we used (\ref{f3}) for $\alpha=1-\frac{m}{p}$. 
Hence,  by setting $q=2p\in (n,+\infty)$,   we prove (\ref{TZZZZ-monotone}).
\end{proof}


\begin{thebibliography}{99}


\bibitem{BE}  D.  Bakry  and  M.  Emery,   Diffusions hypercontractives,  In S\'{e}minaire de probabilit\'{e}s, XIX, 1983/84,  Lecture Notes in Math.,  vol.  1123,  177-206,  Springer, Berlin, 1985.

\bibitem {Be} A. Besse,  {\it Einstein Manifolds}. Berlin, Heidelberg: Springer-Verlag, 2007.

\bibitem{CZ} H. Cao and D. Zhou, {\it  On  complete  gradient shrinking Ricci solitons}, J. Differential Geom.  85  (2010), no. 2, 175-186.

\bibitem{CQ}  H. Cao  and  Q. Chen,
\textit{On Bach-flat gradient shrinking Ricci solitons},
Duke Math. J. 162 (2013), 1149-1169.


\bibitem{CC} J. Cheeger and T. Colding, {\it On the structure of spaces with Ricci curvature bounded below. I}, J. Differential Geom.  46  (1997),  no. 3, 406-480.


\bibitem{CCT} J. Cheeger,  T. Colding and G.  Tian,  {\it On the singularities of spaces with bounded Ricci curvature},
Geom. Funct. Anal. 12 (2002), no. 5, 873-914.

\bibitem {Ha} R. Hamilton,
\textit{ Formation of singularities in the Ricci flow},  Surveys in Diff. Geom., 2 (1995), 7-136.

\bibitem{Qi} Z. Qian,  {\it   Estimates for weighted volumes and applications}.  Quart. J. Math., 48 (1997), no. 2, 235-242.

\bibitem{Pe} G. Perelman, {\it The entropy formula for the Ricci flow and its
geometric applications},  arXiv: math/0211159, 2002.

\bibitem {Pe2} G. Perelman,  unpublished, 2003.

\bibitem{PW} P. Petersen and  G. Wei, {\it Relative volume comparison with integral curvature bounds}, Geom. Funct. Anal. 7 (1997), no. 6, 1031-1045.

\bibitem{PS} P. Petersen and C. Sprouse, {\it Integral curvature bounds,  distance estimates and applications}. J. Differential Geom. 50 (1998), no. 2, 269-298.

\bibitem{PW2} P. Petersen and  G. Wei, {\it  Analysis and geometry on manifolds  with integral Ricci curvature bounds. II}, Trans. Amer. Math. Soc. 353 (2001), no. 2, 457-478.

\bibitem{ST}  N. Sesum  and  G. Tian,  \textit{Bounding scalar curvature and diameter along the K\"{a}hler-Ricci flow (after Perelman)}, J. Inst. Math. Jussieu, 7 (2008), no. 3,  575-587.

\bibitem{Ti} G. Tian, {\it  K\"ahler-Einstein metrics with positive scalar curvature}, Invent. Math., 130 (1997), 1-37.



 \bibitem{TZ}   G. Tian and  X.H. Zhu,  {\it A new holomorphic invariant and uniqueness of K\"ahler-Ricci
 solitons}, Comm. Math. Helv.  77  (2002), 297-325.

  \bibitem{TZZZ}  G.Tian, S.  Zhang,  Z. Zhang  and  X.H. Zhu, {\it Perelman's entropy and K\"ahler-Ricci flow on a Fano manifold}, Trans. Amer. Math. Soc.  365  (2013), no. 12, 6669-6695.

\bibitem{TZZZZ} G. Tian,  Q. Zhang, Z. Zhang, M. Zhu and X.H. Zhu,  {\it Laplace comparison on K\"ahler Ricci flow and convergence},  Peking Math. J., to appear, doi.org/10.1007/s42543-026-00136-3.

\bibitem{TZq} G. Tian and Q. Zhang, {\it Isoperimetric inequality under K\"ahler Ricci Flow}, Amer. J. Math. 136 (2014), no. 5, 1155-1173.


\bibitem{TZhangZ1} G. Tian  and Z. Zhang,
{\it Regularity of  K\"ahler Ricci solitons},  Intern.  Math.  Res.  Notices,  2012, 957-985.


\bibitem{TZhangZ} G. Tian and Z.  Zhang,
{\it Regularity of  K\"ahler Ricci flows on Fano manifolds}, Acta Math. 216 (2016), no. 1, 127-176.
\bibitem{TZZ}  G. Tian,   L. Zhang  and  X. H. Zhu,  {\it K\"ahler-Ricci  flow for deformed  complex structures}, Trans. Amer. Math. Soc. 376 (2023), 1999-2046.

\bibitem{WZ} F. Wang and X.H.  Zhu,  {\it Structure of spaces with Bakry-Emery Ricci curvature bounded below},   Journal f\"ur die reine und angewandte Mathematik (Crelles Journal), vol. 2019, no. 757, 2019, pp. 1-50.

\bibitem{WW}  G. Wei  and  W. Wylie,  Comparison geometry for Bakry-\'Emery Ricci curvature,  J. Differential Geom. 83 (2009), 337-405.

\bibitem{Wu} J. Wu,  {\it  Comparison geometry for integral  Bakry-\'Emery Ricci tensor bounds}, Jour. of Geom. Analysis, vol. 29 (2019), no. 1, 828-867.

\bibitem{Zhang} Q. Zhang, {\it Bounds on volume growth of geodesic balls under Ricci flow},
Math. Res. Lett. 19 (2012), no. 1, 245-253.

\end{thebibliography}
\end{document}